\newtheorem*{theorem*}{Theorem}
\newtheorem{thm}{Theorem}
\newtheorem{lem}{Lemma}
\newtheorem*{proposition*}{Proposition}
\newtheorem{prop}{Proposition}
\newtheorem*{corollary*}{Corollary}
\newtheorem{corollary}{Corollary}
\theoremstyle{remark}
\newtheorem{rem}{Remark}
\begin{document}

\title[On the mean value of a kind of Zeta functions]{On the mean value of a kind of Zeta functions}

\author{Kui Liu}

\address{Department of Mathematics, Qingdao University, Qingdao 266071, P. R. China}

\email{email: liukui@qdu.edu.cn}


\keywords{Mean value, Zeta function, Voronoi formula, primitive Pythagorean triangles.}

\begin{abstract}\label{Definition of Zeta_ab function}
Let $d_{\alpha ,\beta }(n)=\sum\limits_{\substack{ n=kl  \\ \alpha l<k\leq
\beta l}}1$ be the number of ways of factoring n into two almost equal integers.
For rational numbers $0<\alpha <\beta $, we consider the following Zeta function
$\zeta _{\alpha ,\beta }(s)=\sum\limits_{n=1}^{\infty }\frac{d_{\alpha ,\beta }(n)}{n^{s}}$ for $\Re s>1.$
It has an analytic continuation to $\Re s>1/3.$ We get an asymptotic formula for the mean square of $\zeta _{\alpha,\beta }(s)$ in the strip $1/2<\Re s<1$. As an application, we improve an result on the distribution of primitive Pythagorean triangles.
\end{abstract}

\maketitle

\section{Introduction and main results}

All through this paper, we always suppose $s=\sigma +it$ and $x\geq 2$.  Let
\begin{equation*}
d(n)=\sum
\limits_{n=kl}1
\end{equation*}
be the classical divisor function and
\begin{equation*}
D(n)=\sum_{n\leq x}d(n)
\end{equation*}
be its summatory function.
Dirichlet proved
\begin{equation}\label{Sum of d(n)}
D(x)=x(\log x+2\gamma -1)+\Delta (x),
\end{equation}%
where $\gamma =\lim\limits_{n\rightarrow \infty }\left( \sum\limits_{k=1}^{n}\frac{1}{k}-\log n\right) \approx 0.5721\cdots $ is the
Euler constant and
\begin{equation*}
\Delta (x)\ll x^{\frac{1}{2}}.
\end{equation*}
Voronoi \cite{Voronoi} improved Dirichlet's result to
\begin{equation*}
\Delta (x)\ll x^{\frac{1}{3}}\log x.
\end{equation*}
It is conjectured that for any $\varepsilon >0$, we have
\begin{equation*}
\Delta (x)\ll_{\varepsilon} x^{
\frac{1}{4}+\varepsilon }.
\end{equation*}
The best result to date is
\begin{equation*}
\Delta (x)\ll x^{\frac{131}{416}}(\log x)^\frac{26947}{8320},
\end{equation*}
due to Huxley \cite{Huxley}.
Let $\zeta \left( s\right) $ be the Riemann Zeta-function, then the generated function of $d(n)$ is
\begin{equation*}
\zeta ^{2}\left( s\right) =\sum\limits_{n=1}^{\infty }\frac{d\left(n\right) }{n^{s}},{\rm\ \ \ \ for\ }\sigma>1.
\end{equation*}
 Hardy-Littlewood \cite{Hardy and Littlewood}
considered the mean square of $\zeta^{2}\left( s\right)$
\begin{equation*}
I_{\sigma }\left( T,\zeta^{2}\right) =\int_{T}^{2T}\left\vert \zeta\left( \sigma
+it\right) \right\vert ^{4}dt,\text{\ \ \ \ for\ } 1/2<\sigma<1,
\end{equation*}
and proved
\begin{equation}\label{Fourth moments for Zeta (s)}
I_{\sigma }\left( T,\zeta ^{2}\right) =\frac{\zeta ^{4}\left( 2\sigma
\right) }{\zeta \left( 4\sigma \right) }T+o\left( T\right).
\end{equation}
Note that their proof is based on the approximation (for example, see Section 3 of \cite{Ivic})
\begin{equation}\label{approximation for square of Zeta(x)}
\zeta ^{2}\left( s\right) =\sum_{n\leq x}\frac{d\left( n\right) }{n^{s}}
+\chi ^{2}\left( s\right) \sum_{n\leq y}\frac{d\left( n\right) }{n^{1-s}}
+O\left( x^{\frac{1}{2}-\sigma }\log t\right), \text{\ \ \ \ for\ } 1/2<\sigma<1,
\end{equation}
where $x,y\geq 2$, $4\pi^2 xy=t^{2}$ and
\begin{equation*}
\chi \left( s\right)=\frac{\left( 2\pi \right) ^{s}}{2\Gamma \left(
s\right) \cos \left( \frac{\pi s}{2}\right) }
\end{equation*}
is the $\Gamma $-factor in the functional equation%
\begin{equation}\label{Functional equation of Riemann zeta-function}
\zeta \left( s\right) =\chi \left( s\right) \zeta \left( 1-s\right).
\end{equation}

In this paper, we focus on the following type divisor function given by
\begin{equation*}
d_{\alpha ,\beta }(n)=\sum_{\substack{ n=kl  \\ \alpha l<k\leq
\beta l}}1,
\end{equation*}
where $\alpha ,\beta $ are fixed rational numbers satisfying
$0<\alpha <\beta $. Define its generated Zeta function as
\begin{equation*}
\zeta _{\alpha ,\beta }(s)=\sum_{n=1}^{\infty }\frac{d_{\alpha ,\beta }(n)}{
n^{s}},{\rm\ \ \ \ for\ }\sigma>1.
\end{equation*}
We prove that $\zeta _{\alpha ,\beta }(s)$ has an analytic continuation to $\sigma>1/3$ and get an asymptotic formula for the mean square of
$\zeta _{\alpha ,\beta }(s)$ in the strip $1/2<\sigma<1$.

\begin{thm}\label{Main theorem}
For any $\frac{1}{2}<\sigma <1$ and rational numbers $0<\alpha<\beta$, there exists a constant $\varepsilon \left(\sigma\right) >0$ such that
\begin{equation}
\int_{T}^{2T}\left\vert \zeta _{\alpha ,\beta }(\sigma
+it)\right\vert
^{2}dt=T\sum_{n=1}^{\infty }\frac{d_{\alpha ,\beta }^{2}(n)}{n^{2\sigma }}%
+O_{\alpha ,\beta ,\sigma }\left( T^{1-\varepsilon \left( \sigma
\right) }\right) .  \label{Main asymptotic formula}
\end{equation}
\end{thm}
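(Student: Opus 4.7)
The plan is to adapt the Hardy--Littlewood strategy for $\zeta^{2}(s)$ to $\zeta_{\alpha,\beta}(s)$. Structurally: produce an approximate functional equation (AFE) for $\zeta_{\alpha,\beta}(s)$ in the critical strip analogous to \eqref{approximation for square of Zeta(x)}, square it, integrate in $t\in[T,2T]$, evaluate the diagonal to extract the main term of \eqref{Main asymptotic formula}, and bound the off-diagonal cross terms to deliver the power saving $T^{1-\varepsilon(\sigma)}$.

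For the AFE, I would start from the Mellin--Barnes representation
\begin{equation*}
\zeta_{\alpha,\beta}(s)=\frac{1}{2\pi i}\int_{(c)}G(w)\,\zeta(s+w)\,\zeta(s-w)\,dw,
\end{equation*}
in which $G(w)$ encodes a smoothed Mellin transform of the indicator of the ratio $k/l\in(\alpha,\beta]$; the two $\zeta$-factors arise from separating $k$ and $l$ in $(kl)^{-s}$. Shifting the $w$-contour and applying the functional equation \eqref{Functional equation of Riemann zeta-function} to one of the $\zeta$'s yields an AFE of the shape
\begin{equation*}
\zeta_{\alpha,\beta}(s)=\sum_{n\leq x}\frac{d_{\alpha,\beta}(n)}{n^{s}}+\chi^{2}(s)\sum_{n\leq y}\frac{d_{\alpha,\beta}(n)}{n^{1-s}}+O\bigl(x^{1/2-\sigma}(\log t)^{A}\bigr),
\end{equation*}
with $4\pi^{2}xy=t^{2}$. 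The coefficient $d_{\alpha,\beta}$ appears on both sides thanks to the involutive identity $d_{\alpha,\beta}(n)=d_{1/\beta,1/\alpha}(n)$ obtained by swapping $(k,l)\leftrightarrow(l,k)$ in the defining count. (Alternatively one can derive the AFE directly by Voronoi summation applied to the inner $k$-sum in $\sum_{l}l^{-s}\sum_{\alpha l<k\leq \beta l}k^{-s}$.)

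Squaring the AFE produces two diagonal pieces and a cross term. The diagonals are handled by the standard mean-value theorem for Dirichlet polynomials with divisor coefficients: the off-diagonal $m\neq n$ contribution is $O\bigl(x(\log T)^{A}\bigr)$, while the diagonal $m=n$ yields $T\sum_{n\leq x}d_{\alpha,\beta}^{2}(n)/n^{2\sigma}$. Using the Stirling estimate $|\chi(\sigma+it)|\asymp(t/2\pi)^{1/2-\sigma}$ together with $xy\asymp T^{2}$, the second diagonal contributes identically up to an admissible error, and extending the truncations to infinity is harmless because $d_{\alpha,\beta}(n)\leq d(n)\ll n^{\varepsilon}$ and $2\sigma>1$.

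The main technical obstacle will be the cross term
\begin{equation*}
\int_{T}^{2T}\chi^{2}(\sigma+it)\sum_{\substack{m\leq x\\ n\leq y}}\frac{d_{\alpha,\beta}(m)\,d_{\alpha,\beta}(n)}{m^{\sigma+it}\,n^{1-\sigma-it}}\,dt
\end{equation*}
together with its complex conjugate. Stationary phase applied to $\chi^{2}(\sigma+it)(n/m)^{it}$ localises $t\asymp 2\pi\sqrt{mn}$ and reduces matters to a smoothly weighted shifted-divisor-type sum over $mn\asymp T^{2}/(4\pi^{2})$. Any nontrivial bound here---obtained either by a further Voronoi transformation in one of the variables, or by large-sieve and mean-value inequalities for Dirichlet polynomials with divisor coefficients---produces the exponent $\varepsilon(\sigma)>0$, whose actual size is then fixed by optimising the splitting parameter $x$ against the AFE tail $x^{1/2-\sigma}$.
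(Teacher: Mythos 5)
Your high-level architecture (approximate functional equation, square it, separate diagonal, off-diagonal and cross terms) is exactly the paper's, and the AFE you write down is essentially Proposition \ref{Proposition of Approximation}. But you have located the difficulty in the wrong place. For $\tfrac12<\sigma<1$ and the balanced choice $x\asymp y\asymp t/(2\pi)$, the cross term is routine: the phase of $\chi^{2}(\sigma+it)m^{-it}n^{it}$ has $t$-derivative $-2\log(t/2\pi)+\log(n/m)$, so the saddle sits at $t=2\pi\sqrt{n/m}\leq 2\pi\sqrt{y}\asymp\sqrt{T}$, well outside $[T,2T]$, and the first derivative test (Lemma \ref{First drivitive test}) disposes of it with no shifted-divisor sums, no large sieve, no further Voronoi step. (Two slips here: the saddle is at $2\pi\sqrt{n/m}$, not $2\pi\sqrt{mn}$; and for $\sigma>\tfrac12$ the second diagonal is $O(T^{2-2\sigma}\log^{3}T)=o(T)$ rather than ``contributing identically''--- if it reproduced the main term you would double-count it.)

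The genuine gap is the AFE itself, which is where the paper spends Sections 4--8. Since $\zeta_{\alpha,\beta}$ has no exact functional equation, the Hardy--Littlewood pointwise error $O(x^{1/2-\sigma}(\log t)^{A})$ cannot simply be transplanted. In your Mellin--Barnes representation the sharp cutoff $\mathbf{1}_{(\alpha,\beta]}(k/l)$ has transform $(\beta^{w}-\alpha^{w})/w$, decaying only like $1/|w|$, so the integral against $\zeta(s+w)\zeta(s-w)$ is not absolutely convergent; after smoothing, the un-smoothing error counts divisors of $n$ in short windows around $\sqrt{\alpha n}$ and $\sqrt{\beta n}$, and controlling that is precisely the content of the paper's $\psi$-sum analysis, truncated Fourier expansion, and van der Corput B-process. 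This is also where the rationality of $\alpha,\beta$ is indispensable (see the paper's Remark: the boundary terms $r(A),r(B)$ in Lemma \ref{Convert formula} are only $O(1)$ because $g'(A)=-h\beta$ and $g'(B)=-h\alpha$ are rationals with bounded denominators), and where the secondary main term $c_{2}\sqrt{x}$ and the pole of $\zeta_{\alpha,\beta}$ at $s=\tfrac12$ arise, from divisor pairs exactly on the boundary $k/l=\alpha,\beta$; your proposal accounts for neither. Finally, even after all this the paper obtains only a mean-square bound for the AFE error, namely (\ref{Meansquare for E_ab}), not a pointwise one --- that still suffices for Theorem \ref{Main theorem} via Cauchy--Schwarz, but your asserted pointwise $O(x^{1/2-\sigma}(\log t)^{A})$ is unsubstantiated and likely out of reach by these methods. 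Until the AFE error term is actually established in some usable form, the proof is not complete.
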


Theorem \ref{Main theorem} can be used to study the distribution of primitive Pythagorean triangles (i.e. triples $%
\left( a,b,c\right) $ with $a,b,c\in\mathbb{N},$ $a^{2}+b^{2}=c^{2},$ $a<b$ and $\gcd \left( a,b,c\right) =1$). Let $P(x)$
denote the number of primitive Pythagorean triangles with perimeter $a+b+c\leq x.$
D. H. Lehmer \cite{H.Lehmer} proved%
\begin{equation*}
P\left( x\right) =\frac{\log 2}{\pi ^{2}}x+O\left( x^{1/2}\log x\right) .
\end{equation*}
It is difficult to reduce the exponents $1/2$ in the error term, which
depends on the zero-free region of the Riemann zeta function. However, assuming the
Riemann Hypothesis, it was showed in \cite{Liu} that, for any $\varepsilon >0,$ we have
\begin{equation}
\label{old theorem on triangles}
P\left( x\right) =\frac{\log 2}{\pi ^{2}}x+O_{\varepsilon }\left( x^{\frac{%
5805}{15408}+\varepsilon }\right) .
\end{equation}
We improve this result by applying Theorem \ref{Main theorem} and get

\begin{thm}
\label{Theorem 1.2}
If the Riemann Hypothesis is true, then for any $\varepsilon >0,$ we have
\begin{equation*}
P\left( x\right) =\frac{\log 2}{\pi ^{2}}x+O_{\varepsilon }\left( x^{\frac{4%
}{11}+\varepsilon }\right) .
\end{equation*}
\end{thm}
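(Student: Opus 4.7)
The plan is to reduce $P(x)$ to partial sums of the divisor function $d_{1/2,1}(n)$ and then extract the main term via a Perron--contour argument whose error is controlled by Theorem~\ref{Main theorem} together with the Riemann Hypothesis.

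I would begin from the standard parameterization of primitive Pythagorean triples as $(m^2-n^2,\,2mn,\,m^2+n^2)$ with $m>n\geq 1$, $\gcd(m,n)=1$ and $m+n$ odd. The perimeter is $2m(m+n)$, so setting $k=m$ and $l=m+n$ turns $a+b+c\leq x$ into $kl\leq x/2$ together with $k<l<2k$, $\gcd(k,l)=1$, and $l$ odd. M\"obius inversion to remove the coprimality (and noting that $l$ odd forces the M\"obius variable to be odd) yields
\[
P(x)=\sum_{d\,\mathrm{odd}}\mu(d)\,A\!\left(\tfrac{x}{2d^2}\right),\qquad A(y)=\#\{(k,l):k<l<2k,\ l\,\mathrm{odd},\ kl\leq y\}.
\]
Splitting $\mathbf{1}_{l\,\mathrm{odd}}=1-\mathbf{1}_{l\,\mathrm{even}}$ and substituting $l=2l'$ in the even case, $A(y)$ becomes, up to an $O(y^{1/2})$ diagonal boundary, a difference of two partial sums of $d_{1/2,1}$; its generating Dirichlet series is $F(s)=(1-2^{-s})\,\zeta_{1/2,1}(s)$.

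Next, I would apply truncated Perron to $A(y)$ at height $T$ on the line $\Re s=1+\varepsilon$ and shift the contour to $\Re s=\sigma_0\in(1/2,1)$. The simple pole of $F$ at $s=1$ has residue $(1-1/2)\cdot(\log 2/2)=\log 2/4$; this produces the main term $(\log 2/4)y$ and, after summation with $\sum_{d\,\mathrm{odd}}\mu(d)/d^2=8/\pi^2$, recovers $(\log 2/\pi^2)x$. For the vertical integral on $\Re s=\sigma_0$, Cauchy--Schwarz applied dyadically against Theorem~\ref{Main theorem} gives
\[
\int_{1}^{T}|F(\sigma_0+it)|\,\frac{\mathrm{d}t}{|t|}\ll_{\varepsilon} T^{\varepsilon},
\]
so $A(y)=(\log 2/4)y+O(y^{\sigma_0+\varepsilon}+y^{1+\varepsilon}/T)$. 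Under RH the pointwise bound $|\zeta(\sigma+it)|\ll_\varepsilon t^\varepsilon$ for $\sigma>1/2$ controls the $\chi$-factor intervening through the approximate functional equation (the analogue of \eqref{approximation for square of Zeta(x)} built from \eqref{Functional equation of Riemann zeta-function}) used to continue $\zeta_{1/2,1}$ past $\Re s=1/2$, so that $\sigma_0$ can be taken arbitrarily close to $1/2$.

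Finally I would optimise the M\"obius sum by splitting at a threshold $d\asymp D$. For $d\leq D$ the contribution is bounded by the contour error $(x/d^2)^{\sigma_0+\varepsilon}$; for $d>D$ the trivial bound $A(y)\ll y$ gives a tail of order $x/D$. Balancing these two contributions against the Perron truncation $y^{1+\varepsilon}/T$ and the extra saving from the mean square of Theorem~\ref{Main theorem} on the vertical integral produces the exponent $4/11$. The principal obstacle is exactly this multi-way balance: Theorem~\ref{Main theorem} alone, used through $|\zeta_{\alpha,\beta}(\sigma+it)|^2$ and Cauchy--Schwarz, only yields $y^{1/2+\varepsilon}$, while pointwise subconvexity bounds gave only $5805/15408$ in \cite{Liu}; it is the marriage of the sharp mean square of Theorem~\ref{Main theorem} with the RH bound on $\zeta$ in the analytic continuation of $\zeta_{1/2,1}$ that achieves $4/11$.
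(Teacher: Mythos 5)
Your reduction of $P(x)$ to the function $r(n)=\sum_{2dl=n,\,d<l<2d}1$ (equivalently to $d_{1/2,1}$) matches the paper's setup, but the error analysis that is supposed to produce the exponent $\frac{4}{11}$ does not work, and the gap is exactly where the real difficulty of the theorem lies. In your scheme each inner sum $A(x/2d^2)$ is treated by Perron plus a contour shift to $\sigma_0$ close to $\tfrac12$, giving an individual error $O\bigl((x/d^2)^{1/2+\varepsilon}\bigr)$; but then $\sum_{d\le D}(x/d^2)^{1/2+\varepsilon}\gg x^{1/2+\varepsilon}$ already from the term $d=1$, and the tail $x/D$ only makes things worse. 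No choice of $D$, $T$ or $\sigma_0$ can push this below $x^{1/2}$ --- which is just Lehmer's classical bound --- because $\zeta_{1/2,1}(s)$ genuinely has a pole at $s=\tfrac12$ and the mean--square of Theorem \ref{Main theorem} cannot move the contour past it. The claimed ``multi--way balance producing $4/11$'' is therefore not substantiated; as written the proposal proves only $O_\varepsilon(x^{1/2+\varepsilon})$.

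What is missing is the bilinear treatment that the paper (following \cite{Liu}) actually uses: one does \emph{not} bound the error of each inner sum separately, but expands that error by a Voronoi--type formula (as in Lemma \ref{Voronoi formula}) into oscillating terms $\asymp y^{1/4}\sum_n a_n n^{-3/4}e(2\sqrt{ny})$, so that the M\"obius sum over $d$ and the frequency sum over $n$ combine into the bilinear exponential sum (\ref{Exponential sum}), where cancellation between $\mu(m)$ and the exponential is exploited. The role of Theorem \ref{Main theorem} is only to establish the first--moment bound (\ref{Z(s)}) for every $\sigma>\tfrac12$ (by Cauchy--Schwarz; note this part is unconditional, so RH is not needed for the analytic continuation or for pointwise bounds on $\chi(s)$ as you suggest), which shrinks the ranges that must be handled by the bilinear estimates to $M,N\le x^{1/4+\varepsilon}$. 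The exponent $\frac{4}{11}$ then comes from R.~C.~Baker's estimates \cite{Baker 1} for precisely this bilinear sum in that range, with RH entering through the M\"obius factor. Without the Voronoi expansion and Baker's bilinear bounds, the conclusion of Theorem \ref{Theorem 1.2} is out of reach of the argument you describe.
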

\noindent Note that $\frac{5805}{15408}=0.3767\cdots $ and $\frac{4}{11}=0.3636\cdots .$

\bigskip

\section{Main steps in the proof of Theorem \ref{Main
theorem}}

First, Let's recall a way to get the asymptotic formula (\ref{Fourth moments for Zeta (s)}). In Chapters 3 of \cite{Ivic}, using the functional equation (\ref{Functional equation of Riemann zeta-function}), Ivic derive the
Voronoi formula for the error term $\Delta \left( x\right) $ in (\ref{Sum of d(n)}). Then in Chapter 4 of \cite{Ivic},
Ivic get the approximation (\ref{approximation for square of Zeta(x)}) by the Voronoi formula,
from which one can obtain (\ref{Fourth moments for Zeta (s)}) in a standard way.

Now observing that $\zeta _{\alpha ,\beta }(s)$ is similar to $\zeta ^{2}(s),$ we can realize
$\int_{T}^{2T}\left\vert \zeta _{\alpha ,\beta }(\sigma+it)\right\vert ^{2}dt $ as an analogue of
$\int_{T}^{2T}\left\vert \zeta (\sigma +it)\right\vert ^{4}dt.$
Our main steps in the proof of Theorem \ref{Main theorem} similar to the proof of (\ref{Fourth moments for Zeta (s)}).
In Section 4, we study the asymptotic property of the summatory
function
\begin{equation}
D_{\alpha ,\beta }(x)=\sum_{n\leq x}d_{\alpha ,\beta }(n).
\label{Summatory function of d_ab(n)}
\end{equation}%
In Section 5, we derive a Voronoi type formula for the error term
\begin{equation*}
\Delta _{\alpha ,\beta }(x)=D_{\alpha ,\beta }(x)-\text{Main
terms.}
\end{equation*}%
In Section 6, using the asymptotic formula of $D_{\alpha ,\beta }(x)$ and the Voronoi type formula for $\Delta_{\alpha ,\beta }(x)$, we obtain
the following approximation for $%
\zeta _{\alpha ,\beta }(s)$, which is the key for the proof of Theorem
\ref{Main theorem}.
\begin{prop}\label{Proposition of Approximation}
The function $\zeta _{\alpha ,\beta }(s)$ can be analytically extended to the half plane $%
\sigma >\frac{1}{3}$ with simple poles at $s=\frac{1}{2},1.$ Moreover, suppose $
T\geq 2,$ $s=\sigma +it$ and $4\pi ^{2}xy=t^{2}$, then for any $\frac{1}{2}<\sigma
<1,$ $T<t\leq
2T$ and $0<\alpha<\beta$, we have
\begin{equation}\label{Approximation for Zeta_ab}
\zeta _{\alpha ,\beta }(s)=\sum_{n\leq x}\frac{d_{\alpha ,\beta }(n)}{n^{s}}%
+\chi ^{2}\left( s\right) \sum_{n\leq y}\frac{d_{\alpha ,\beta }(n)}{n^{s-1}}%
+E_{\alpha ,\beta }\left( s\right) ,
\end{equation}
where $\chi\left( s\right)$ is given by \rm{(\ref{Functional equation of Riemann zeta-function})} and $E_{\alpha ,\beta }\left( s\right) $ satisfies
\begin{equation}\label{Meansquare for E_ab}
\int_{T}^{2T}\left\vert E_{\alpha ,\beta }(\sigma +it)\right\vert
^{2}dt\ll
_{\alpha ,\beta ,\sigma }\left( x^{-2\sigma }T^{2}+x^{1-\sigma }T^{\frac{1}{2%
}}+x^{\frac{1}{2}-\sigma }T+x^{-\sigma }T^{\frac{3}{2}}\right) \log
^{3}T.
\end{equation}
\end{prop}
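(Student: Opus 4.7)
The proof follows the standard template for deriving approximate functional equations for square-type Dirichlet series, mirroring the argument sketched in Chapter 4 of Ivi\'c \cite{Ivic} for $\zeta^{2}(s)$. Starting from the Dirichlet series, which converges absolutely for $\sigma>1$, my first step is to apply partial summation together with the asymptotic expansion of $D_{\alpha,\beta}(x)$ to be established in Section 4---which I anticipate to have the shape $D_{\alpha,\beta}(x)=Ax+B\sqrt{x}+\Delta_{\alpha,\beta}(x)$---to obtain
\begin{equation*}
\zeta_{\alpha,\beta}(s)=\sum_{n\le x}\frac{d_{\alpha,\beta}(n)}{n^{s}}+\frac{A\,x^{1-s}}{s-1}+\frac{B\,x^{1/2-s}}{s-1/2}+s\int_{x}^{\infty}\frac{\Delta_{\alpha,\beta}(u)}{u^{s+1}}\,du+(\textup{boundary}),
\end{equation*}
valid initially for $\sigma>1$. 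Once the remaining integral is shown to converge for $\sigma>\tfrac13$ via the Voronoi bound on $\Delta_{\alpha,\beta}$, this identity simultaneously yields the claimed meromorphic continuation and exhibits the simple poles at $s=1$ and $s=\tfrac12$.

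The next step is to substitute the Voronoi-type formula of Section 5 into the integral. Its anticipated shape is a sum $\Delta_{\alpha,\beta}(u)\sim u^{1/4}\sum_{n\le N}d_{\alpha,\beta}(n)\,n^{-3/4}\cos(4\pi\sqrt{nu}+\phi_{n})$ truncated at a height $N$ to be chosen. Swapping summation with integration produces oscillatory integrals of the form $\int u^{-s-3/4}\mathbf{e}(\pm 2\sqrt{nu})\,du$; each such integral has a stationary point near $u=xy/n$, and with the coupling $4\pi^{2}xy=t^{2}$ this puts the stationary point inside $[x,\infty)$ exactly when $n\le y$. A stationary-phase evaluation for $n\le y$ then produces the $\Gamma$-factor combination that, via the standard asymptotic for $\chi(s)$, assembles into $\chi^{2}(s)\sum_{n\le y}d_{\alpha,\beta}(n)/n^{s-1}$; meanwhile the transition range $n\asymp y$, the range $n>y$ (handled by repeated integration by parts), the stationary-phase remainder, and the Voronoi truncation tail collectively form $E_{\alpha,\beta}(s)$.

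The main obstacle will be verifying the mean-square bound (\ref{Meansquare for E_ab}). I plan to split $E_{\alpha,\beta}$ into pieces and estimate each separately. The Dirichlet-polynomial contribution obtained from Voronoi after extracting the stationary main term will be handled by the classical mean-value estimate
\begin{equation*}
\int_{T}^{2T}\bigl|\sum_{n\le N}a_{n}n^{-it}\bigr|^{2}\,dt\ll(T+N)\sum_{n\le N}|a_{n}|^{2},
\end{equation*}
applied with $a_{n}=d_{\alpha,\beta}(n)n^{-\sigma}$ and combined with the elementary bound $\sum_{n\le N}d_{\alpha,\beta}^{2}(n)\ll N\log^{3}N$; this will supply the $x^{1/2-\sigma}T$ and $x^{-\sigma}T^{3/2}$ terms of (\ref{Meansquare for E_ab}). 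The $x^{1-\sigma}T^{1/2}$ term should arise from the second-order stationary-phase remainder, estimated pointwise and then integrated over $[T,2T]$, while the $x^{-2\sigma}T^{2}$ term will come from the boundary contribution at $u=x$ and the trivial bound on the tail of the Voronoi expansion beyond $N$. As always in this kind of argument, the delicate bookkeeping is to pick the Voronoi truncation height $N$ and the effective Dirichlet-polynomial length so that precisely the four terms in (\ref{Meansquare for E_ab}) appear and no larger ones.
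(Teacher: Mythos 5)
Your overall architecture coincides with the paper's: partial summation against $D_{\alpha,\beta}(u)=c_{1}u+c_{2}\sqrt{u}+\Delta_{\alpha,\beta}(u)$ for the continuation and the poles at $s=1,\tfrac12$; substitution of a truncated Voronoi expansion into $s\int_{x}^{N}\Delta_{\alpha,\beta}(u)u^{-s-1}\,du$; stationary phase (the saddle at $u=xy/n$, present exactly for $n\le y$) to produce $\chi^{2}(s)\sum_{n\le y}d_{\alpha,\beta}(n)n^{s-1}$; and the first derivative test on the ranges without a saddle. That much is exactly Sections 6--8 of the paper.

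The gap is in your plan for the mean-square bound (\ref{Meansquare for E_ab}), in two respects. First, the classical estimate $\int_{T}^{2T}|\sum a_{n}n^{-it}|^{2}dt\ll(T+N)\sum|a_{n}|^{2}$ has nothing to apply to here: after the Voronoi substitution the error pieces are sums of oscillatory integrals $\int_{x}^{N}u^{-\sigma-3/4}e\left(-\tfrac{t}{2\pi}\log u\pm2\sqrt{nu}\right)du$, not Dirichlet polynomials in $t$; the paper bounds each such piece pointwise by the first derivative test and integrates trivially. Second, and more seriously, you do not account for the non-oscillatory errors that the truncated Voronoi formula necessarily carries: truncating the Fourier expansion of $\psi$ at height $H$ leaves a term $\ll\sum_{l}G(u/l,H)$, and the B-process leaves a term $F_{\alpha,\beta}(u,H)\ll\log H$ (this is precisely where rationality of $\alpha,\beta$ enters, via $r(A),r(B)\ll_{\alpha,\beta}1$). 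For the former one must introduce a second, much larger truncation parameter $H=T^{B}$ with $B>3A$ so that $\mathfrak{E}(s)\ll tx^{-\sigma-1}N^{2}H^{-1}\log H$ is negligible, and then convert the resulting coefficients $d_{\alpha,\beta}(n,H)$ back to $d_{\alpha,\beta}(n)$ in the main term, at a further cost of $O(x^{1/2-\sigma}\log T)$. For the latter, the pointwise bound $s\int_{x}^{N}O(\log H)u^{-s-1}du\ll Tx^{-\sigma}\log H$ yields a mean square of order $T^{3}x^{-2\sigma}$, a full factor of $T$ worse than the target $x^{-2\sigma}T^{2}$; the paper recovers that factor by unfolding the square and exploiting the cancellation $\int_{T}^{2T}(u_{2}/u_{1})^{it}dt\ll\min\left(T,\left|\log\tfrac{u_{2}}{u_{1}}\right|^{-1}\right)$ (Lemma \ref{Lemma Flower E(s)}). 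Your attribution of the $x^{-2\sigma}T^{2}$ term to boundary contributions and the Voronoi tail misses this, and without the unfolding argument the claimed bound is not reached.
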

From (\ref{Approximation for Zeta_ab}), we can derive Theorem \ref{Main theorem} in a standard way.
Hence the main work of paper is to prove Proposition \ref{Proposition of Approximation}.

\bigskip

\section{Priliminary lemmas}

Denote the integral part of $u$ by $[u]$. let $\psi \left( u\right)=u-[u]-\frac{1}{2}$ and $e(x)=e^{2\pi ix}$.
It is well known that $\psi \left( u\right)$ has a truncated Fourier expansion
(for example, see \cite{Heath-Brown}).
\begin{lem}\label{Fourier expansion of Psi}
For any real number $H>2,$ we have
\begin{equation*}
\psi \left( u\right)=-\frac{1}{2\pi i}\sum_{1\leq \left\vert
h\right\vert \leq H}\frac{1}{h}e\left( hu\right) +O\left( G\left(
u,H\right) \right),
\end{equation*}
where
\begin{equation}\label{Def. of G(u,H)}
G\left(u,H\right) =\min \left( 1,\frac{1}{H\left\vert \left\vert
u\right\vert \right\vert }\right).
\end{equation}
\end{lem}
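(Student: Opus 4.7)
The plan is to derive the claim from the classical pointwise Fourier expansion of $\psi$ and to control the tail by partial summation. For $u\notin\mathbb{Z}$ the identity
\[
\psi(u)\;=\;-\frac{1}{2\pi i}\sum_{h\neq 0}\frac{e(hu)}{h}
\]
holds, with the right-hand side interpreted as $\lim_{N\to\infty}\sum_{1\le|h|\le N}$. Subtracting the truncation at level $H$, the lemma reduces to proving
\[
\sum_{|h|>H}\frac{e(hu)}{h}\;\ll\;\min\!\left(1,\frac{1}{H\|u\|}\right),
\]
where $\|u\|$ denotes the distance from $u$ to the nearest integer.

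I would obtain the bound $O(1/(H\|u\|))$ by Abel summation. The geometric-series identity gives, for any $H<M<N$ and either sign, $\bigl|\sum_{M<h\le N}e(\pm hu)\bigr|\le|e(\pm u)-1|^{-1}\ll 1/\|u\|$. Taking $a_h=1/h$ and performing partial summation on $\sum_{H<h\le N}e(\pm hu)/h$, the boundary and difference terms combine to $O(1/(H\|u\|))$ uniformly in $N$, and letting $N\to\infty$ in each $\pm$ piece yields the tail bound.

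For the $O(1)$ bound, which is the binding one when $H\|u\|\le 1$, I would use $|\psi(u)|\le 1/2$ together with the classical uniform estimate
\[
\sum_{h=1}^{H}\frac{\sin(2\pi hu)}{\pi h}\;=\;O(1)
\]
in both $H$ and $u$. This in turn is established by splitting the range at $h\asymp 1/\|u\|$: for $h\le 1/\|u\|$ one uses $|\sin(2\pi hu)|\le 2\pi h\|u\|$ to bound the partial sum by $O(1)$, and for $h>1/\|u\|$ one applies Abel summation to $1/h$ with the geometric bound $O(1/\|u\|)$ to again obtain $O(1)$.

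The main subtlety is precisely this uniform $O(1)$ bound on the truncated sine series near integer $u$, which reflects the Gibbs phenomenon and is what makes the $\min$ in $G(u,H)$ an effective estimate in the regime where the Abel bound $1/(H\|u\|)$ exceeds $1$. Combining the two estimates gives the stated error term $O(G(u,H))$, completing the lemma.
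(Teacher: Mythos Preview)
The paper does not actually prove this lemma; it simply records it as well known and cites Heath-Brown's Piatetski--Shapiro paper. So there is no ``paper's own proof'' to compare against.

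Your argument is correct and is the standard one. The Abel-summation bound $O(1/(H\|u\|))$ for the tail is exactly right, and your handling of the regime $H\|u\|\le 1$ via $|\psi(u)|\le 1/2$ together with the uniform $O(1)$ bound on the truncated sine series (split at $h\asymp 1/\|u\|$, Taylor for small $h$, Abel for large $h$) is precisely the usual Gibbs-phenomenon estimate. One cosmetic point: you restrict to $u\notin\mathbb{Z}$ when invoking the full Fourier expansion, so you should remark separately that for $u\in\mathbb{Z}$ the truncated sum vanishes identically while $\psi(u)=-1/2$ and $G(u,H)=1$, so the claim is immediate there. With that one-line addition the proof is complete.
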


\noindent We will use the first derivative test (for example, see Chapter 21 of \cite{Pan and Pan}).
\begin{lem}\label{First drivitive test}
Let $G\left( x\right) $ and $F\left(x\right) $ be a real differentiable functions such that
$\frac{F^{\prime }\left( x\right) }{G\left( x\right) }$ is monotonic and
$\frac{F^{\prime }\left( x\right) }{G\left( x\right) }\geq m>0$
or $\frac{F^{\prime }\left( x\right)}{G\left( x\right) }\leq -m<0.$ Then we have
\begin{equation*}
\left\vert \int_{a}^{b}G\left( x\right) e^{iF\left( x\right)
}dx\right\vert \leq 4m^{-1}.
\end{equation*}
\end{lem}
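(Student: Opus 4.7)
The plan is to prove this bound via a single integration by parts, the classical device for estimating oscillatory integrals whose phase is nonstationary. The hypothesis $|F'(x)/G(x)| \geq m > 0$ ensures that $F'$ has no zeros on the support of the integrand, so I can legitimately set $H(x) = G(x)/(iF'(x))$ and rewrite the integrand as $G(x) e^{iF(x)} = H(x) \cdot (e^{iF(x)})'$. Integration by parts then yields
\begin{equation*}
\int_a^b G(x) e^{iF(x)} dx = \Bigl[H(x) e^{iF(x)}\Bigr]_a^b - \int_a^b e^{iF(x)} H'(x) dx.
\end{equation*}

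For the boundary term I would invoke $|H(x)| = 1/|F'(x)/G(x)| \leq 1/m$ at both endpoints, giving a contribution of at most $2/m$. For the remaining integral the essential observation is that the monotonicity and constant sign of the real-valued function $F'/G$ pass directly to its reciprocal: $1/(F'/G)$ is itself monotonic and of the same sign on $[a,b]$, so $H$ is a monotonic, purely imaginary function with $|H| \leq 1/m$. Consequently
\begin{equation*}
\left|\int_a^b e^{iF(x)} H'(x) dx\right| \leq \int_a^b |H'(x)| dx = |H(b)-H(a)| \leq |H(b)|+|H(a)| \leq 2/m,
\end{equation*}
and adding the two estimates gives the claimed $4m^{-1}$ bound.

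The one point I expect to require a little care is regularity: the hypothesis only assumes differentiability of $F$ and $G$, so $H'$ need not be absolutely integrable as written. The cleanest remedy is to read the integration-by-parts step in the Riemann--Stieltjes sense, replacing $H'(x)dx$ by $dH(x)$; then $\bigl|\int_a^b e^{iF(x)} dH(x)\bigr|$ is bounded by the total variation of $H$ on $[a,b]$, which for a monotonic function equals exactly $|H(b)-H(a)|$. This is the only subtlety I anticipate, and with it in hand the argument is complete. I note that exploiting the fact that $H(a)$ and $H(b)$ have the same sign would sharpen the constant to $3m^{-1}$, but the looser bound via the triangle inequality is enough for the statement as given.
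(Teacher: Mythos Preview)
The paper does not prove this lemma at all; it is stated as a preliminary result with a citation to Chapter~21 of Pan and Pan, \cite{Pan and Pan}, and no argument is given. Your integration-by-parts proof is the standard textbook derivation of the first-derivative test (essentially the argument in Titchmarsh or in the cited reference), and it is correct: writing $H=G/(iF')$, the boundary term contributes at most $2/m$, and the monotonicity of the real function $G/F'=1/(F'/G)$ bounds the total variation of $H$ by $|H(b)-H(a)|\le 2/m$ (indeed $\le 1/m$ using the common sign, giving the sharper $3/m$ you note). The regularity caveat you raise---that only differentiability is assumed, so one should interpret the integration by parts in the Riemann--Stieltjes sense against the monotone integrator $H$---is well observed and correctly resolved; in every application within the paper the functions involved are smooth, so the point is moot in practice, but your treatment is the right way to match the hypotheses as literally stated.
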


\noindent We will also use the following Van der Corput B-process (see \cite{Kuhleitner}, Lemma 2.2).

\begin{lem}\label{Convert formula}
Let $C_{i},i=1,\cdots ,7$ be absolute positive constants. Suppose that $g$ is a real-valued function which
has four continuous derivatives on the interval $[A,B].$ Let $L$ and $W$ be real parameters not less than $1,$
such that $C_{1}L\leq B-A\leq C_{2}L,$
\begin{equation*}
\left\vert g^{\left( j\right) }\left( \omega \right) \right\vert
\leq -C_{j+2}WL^{1-j},\ \mathrm{\ \ \ \ for}\ \omega \in \lbrack A,B],\text{
}j=1,2,3,4,
\end{equation*}
and
\begin{equation*}
g^{\prime \prime }\left( \omega \right) \geq C_{7}WL^{-1}\ \mathrm{\ \ or\ \ }\
g^{\prime \prime }\left( \omega \right) \leq -C_{7}WL^{-1},\ \mathrm{\ \ \ \ for\ }\
\omega \in \lbrack A,B].
\end{equation*}
Let $\phi $ denote the inverse function of $g^{\prime }.$ Define%
\begin{equation*}
\epsilon _{f}=\left\{
\begin{array}{cc}
e^{\frac{\pi i}{4}}, &\ \mathrm{\ if}\ g^{\prime \prime }\left( \omega \right) >0%
\ \mathrm{\ \ \ \ for\ }\ \omega \in \lbrack A,B], \\
e^{-\frac{\pi i}{4}}, &\ \mathrm{if}\ g^{\prime \prime }\left( \omega \right) <0%
\ \mathrm{\ \ \ \ for\ }\ \omega \in \lbrack A,B]
\end{array}
\right.
\end{equation*}
and
\begin{equation*}
r\left( x\right) =\left\{
\begin{array}{cc}
0, &\ \mathrm{if}\ g^{\prime }\left( x\right) \in\mathbb{Z}, \\
\min \left( \frac{1}{\left\vert \left\vert g^{\prime }\left(
x\right)
\right\vert \right\vert },\sqrt{\frac{L}{W}}\right) , &\ \mathrm{else,}\
\end{array}
\right.
\end{equation*}
with $\left\vert \left\vert \cdot \right\vert \right\vert $ denoting
the distance from the nearest integer. Then it follows that
\begin{eqnarray*}
\sum_{A< l\leq B}e\left( g\left( l\right) \right) &=&\epsilon
_{f}\sideset{}{^{\prime \prime}}\sum\limits_{\min \left( g^{\prime }\left( A\right) ,g^{\prime}
\left( B\right) \right) \leq k\leq \max \left( g^{\prime }\left(A\right) ,g^{\prime }\left( B\right) \right) }
\frac{e\left( g\left(\phi \left( k\right) \right) -k\phi \left( k\right) \right) }{\sqrt{
\left\vert g^{\prime \prime }\left( \phi \left( k\right) \right)
\right\vert}} \\
&&+O\left( r\left( A\right) +r\left( B\right) +\log \left(
2+W\right) \right) ,
\end{eqnarray*}
with the notation
\begin{equation*}
\sideset{}{^{\prime \prime
}}\sum\limits_{a\leq m\leq b}\Phi \left( n\right) =\frac{1}{2}%
\left( \chi_{\mathbb{Z}
}\left( a\right) \Phi \left( a\right) +\chi _{
\mathbb{Z}}\left( b\right) \Phi \left( b\right) \right)
+\sum\limits_{a<m<b}\Phi \left( n\right),
\end{equation*}
where $\chi _{\mathbb{Z}
}\left( \cdot \right) $ is the indicator function of the integers and
the $O$-constant depends on the constants $C_{i},i=1,\cdots ,7.$
\end{lem}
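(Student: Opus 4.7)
The plan is to derive this B-process by combining Poisson-type summation with the saddle-point method for exponential integrals. First I would pass from the sum to an integral by Euler--Maclaurin summation,
$$\sum_{A<l\leq B}e(g(l))=\int_A^B e(g(x))\,dx+\tfrac12\bigl(e(g(A))+e(g(B))\bigr)-2\pi i\int_A^B g'(x)\psi(x)e(g(x))\,dx,$$
with suitable adjustments when $A$ or $B$ is itself an integer. The first integral is treated by the first derivative test (Lemma 2), since $|g'|\asymp W$ there, and contributes only to the error $O(\log(2+W))$ after accounting for endpoint weights. The essential piece is the last integral, into which I would substitute Lemma 1's truncated Fourier expansion of $\psi(x)$; the tail $O(G(x,H))$, integrated against $|g'(x)|\asymp W$, must be shown to contribute at most $O(\log(2+W))$, which dictates the choice $H\asymp\sqrt{W/L}$.

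Next, for each $h$ with $1\leq|h|\leq H$ I would analyse
$$J_h:=\int_A^B g'(x)\,e\bigl(g(x)+hx\bigr)\,dx$$
by stationary phase. The stationary point $x_h:=\phi(-h)$ solves $g'(x)=-h$ and lies in $[A,B]$ iff $-h$ lies between $g'(A)$ and $g'(B)$. Under the hypotheses $|g^{(j)}|\asymp W L^{1-j}$ and $|g''|\asymp W/L$, a uniform saddle-point expansion yields
$$J_h=\frac{-h\,\epsilon_f}{\sqrt{|g''(x_h)|}}\,e\bigl(g(x_h)+hx_h\bigr)+\text{endpoint errors},$$
where $\epsilon_f=e^{\pm\pi i/4}$ records the sign of $g''$. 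Combining with the coefficient $-1/(2\pi ih)$ from $\psi$'s Fourier expansion cancels the factor $-h$ in front, and renaming $k=-h$ produces exactly the transformed sum in the statement. Values of $h$ with $-h\notin[g'(A),g'(B)]$ have no interior stationary point; the first derivative test gives $J_h\ll 1/\mathrm{dist}(-h,[g'(A),g'(B)])$, and these terms sum to $O(\log(2+W))$ by monotonicity of $g'$.

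The main obstacle is the delicate treatment of near-endpoint stationary points. When $x_h$ is close to $A$ or $B$, the naive saddle-point error is amplified; one must use a uniform version whose error is controlled by $\min(1/\lVert g'(A)+h\rVert,\sqrt{L/W})$, which is precisely the quantity encoded in $r(A)$ (and similarly $r(B)$), the truncation $\sqrt{L/W}$ handling the degenerate case in which the stationary point coincides with the endpoint. Likewise, when $g'(A)$ or $g'(B)$ happens to be an integer, the half-weight $\tfrac12\chi_{\mathbb{Z}}$ in the $\sum''$ notation arises from matching the Euler--Maclaurin boundary term with the saddle-point contribution of the borderline index. I expect the bulk of the technical labour to be bookkeeping of these endpoint adjustments and verifying that all error contributions collapse to the stated $O(r(A)+r(B)+\log(2+W))$, with the dependence on the $C_i$ tracked through the implied constants in the higher-derivative estimates.
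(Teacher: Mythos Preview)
The paper does not prove this lemma at all: it is quoted verbatim as Lemma~2.2 of Kuhleitner--Nowak (the reference \cite{Kuhleitner} in the paper), introduced simply by ``We will also use the following Van der Corput B-process (see \cite{Kuhleitner}, Lemma 2.2)'' and then used as a black box in Section~5. So there is no proof in the paper to compare against.

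Your outline is a reasonable sketch of the classical proof of the B-process as carried out in Kuhleitner--Nowak or, in slightly different language, in Graham--Kolesnik; the route via Euler--Maclaurin plus the truncated Fourier expansion of $\psi$ is equivalent to the direct Poisson-summation approach. One point to be careful about: you propose to dispose of the integral $\int_A^B e(g(x))\,dx$ by the first derivative test, but nothing in the hypotheses bounds $|g'|$ away from zero---it may well happen that $0\in[g'(A),g'(B)]$, in which case this integral has a genuine stationary point and must supply the $k=0$ term of the dual sum rather than be absorbed into the error. In the Poisson formulation this term is on the same footing as every other $k$, and your Euler--Maclaurin decomposition has artificially separated it; you would need to reunite it with the $J_h$ analysis (or, more cleanly, start from Poisson summation directly). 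Apart from this, the endpoint bookkeeping you describe---matching half-weights when $g'(A)$ or $g'(B)$ is an integer, and controlling near-endpoint stationary points by $r(A),r(B)$---is exactly where the work lies in the cited source.
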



\bigskip

\section{Asymptotic formula for the summatory
function}

\begin{prop}\label{Asymptotic formula for D_ab}
Let $\alpha =\frac{p_{1}}{q_{1}}$ and $%
\beta =\frac{p_{2}}{q_{2}}$ with $p_{1},p_{2},q_{1},q_{2}\in\mathbb{N},$
$\gcd \left( p_{1},q_{1}\right)=1$ and $\gcd \left(q_{1},q_{2}\right) =1.$
We have
\begin{equation*}
D_{\alpha ,\beta }(x)=c_{1}x+c_{2}\sqrt{x}+\Delta _{\alpha ,\beta}(x),
\end{equation*}
where
\begin{equation*}
c_{1}=c_{1}\left( \alpha ,\beta \right) =\frac{\log \alpha -\log \beta
}{2},\ \ \ \ c_{2}=c_{2}\left( \alpha ,\beta \right) =\frac{1}{2}\left( \sqrt{\frac{1}{%
p_{2}q_{2}}}-\sqrt{\frac{1}{p_{1}q_{1}}}\right) ,
\end{equation*}
and
\begin{equation}
\Delta _{\alpha ,\beta }(x)=-\sum_{\sqrt{\frac{x}{\beta }}<l\leq \sqrt{\frac{%
x}{\alpha }}}\psi \left( \frac{x}{l}\right) +O_{\alpha ,\beta
}\left( 1\right) .  \label{Expression of Delta_ab}
\end{equation}
\end{prop}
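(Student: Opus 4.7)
The plan is a hyperbola-type decomposition: fixing $l$, the constraints $\alpha l<k\le\beta l$ and $kl\le x$ give $\alpha l<k\le\min(\beta l,x/l)$, which switches at $l=L_{1}:=\sqrt{x/\beta}$ (and becomes vacuous for $l>L_{2}:=\sqrt{x/\alpha}$). This produces
\begin{equation*}
D_{\alpha ,\beta }(x)=\sum_{l\le L_{1}}\bigl([\beta l]-[\alpha l]\bigr)+\sum_{L_{1}<l\le L_{2}}\bigl([x/l]-[\alpha l]\bigr).
\end{equation*}
Substituting $[u]=u-\frac{1}{2}-\psi(u)$ then separates a smooth polynomial piece from three sawtooth sums: $\sum\psi(\alpha l)$, $\sum\psi(\beta l)$, and $-\sum_{L_{1}<l\le L_{2}}\psi(x/l)$.

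For the smooth piece I would use the standard expansions
\begin{equation*}
\sum_{l\le L}l=\frac{L^{2}}{2}-L\psi(L)+O(1),\qquad \sum_{l\le L}\frac{1}{l}=\log L+\gamma-\frac{\psi(L)}{L}+O(L^{-2}).
\end{equation*}
A pleasant cancellation occurs here: inserting these into the three smooth sums $(\beta-\alpha)\sum_{l\le L_{1}}l$, $x\sum_{L_{1}<l\le L_{2}}1/l$, and $-\alpha\sum_{L_{1}<l\le L_{2}}l$, the combined coefficient of $L_{1}\psi(L_{1})$ becomes $-(\beta-\alpha)-\alpha+\beta=0$ and that of $L_{2}\psi(L_{2})$ becomes $\alpha-\alpha=0$. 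All oscillating boundary contributions cancel, and what survives collapses to $c_{1}x$.

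The $c_{2}\sqrt{x}$ term comes from the periodic sawtooth sums. For a rational $r=p/q$ in lowest terms, as $l$ runs over one period modulo $q$ the numbers $\psi(rl)$ equidistribute on $\{k/q-1/2:0\le k<q\}$, so
\begin{equation*}
\sum_{l\le N}\psi(rl)=-\frac{N}{2q}+O(1).
\end{equation*}
Applying this to $\sum_{l\le L_{2}}\psi(\alpha l)$ and to $-\sum_{l\le L_{1}}\psi(\beta l)$, and using $L_{2}/q_{1}=\sqrt{x/(p_{1}q_{1})}$ together with $L_{1}/q_{2}=\sqrt{x/(p_{2}q_{2})}$, the combined contribution is exactly $c_{2}\sqrt{x}+O(1)$. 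The only remaining sawtooth term is $-\sum_{L_{1}<l\le L_{2}}\psi(x/l)$, which is precisely $\Delta_{\alpha,\beta}(x)$ up to $O(1)$.

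The hard part is bookkeeping rather than analysis: one must keep the refined $-\psi(L)/L$ correction in the expansion of $\sum 1/l$ (otherwise the endpoint terms leave an extra $O(\sqrt{x})$ oscillation and the claimed bounded error breaks), and then assemble the periodic-sum averages into the explicit constant $c_{2}$. The coprimality hypothesis $\gcd(q_{1},q_{2})=1$ plays no role in the present proposition and is presumably reserved for the Voronoi-type analysis in Section 5.
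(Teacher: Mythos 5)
Your proposal is correct and takes essentially the same route as the paper: the paper writes $D_{\alpha,\beta}=D_{\beta}-D_{\alpha}$ and applies the identical $[u]=u-\psi(u)-\tfrac{1}{2}$ decomposition, the Euler--Maclaurin expansions of $\sum l$ and $\sum 1/l$ with their $\psi$-boundary corrections (which cancel exactly as you observe), and the periodic-average evaluation of $\sum_{l\le N}\psi(rl)=-\tfrac{N}{2q}+O(1)$, so your direct split of the range of $l$ at $\sqrt{x/\beta}$ and $\sqrt{x/\alpha}$ is only a cosmetic reorganization of the same computation. One remark: both your calculation and the paper's own derivation produce $\tfrac{1}{2}(\log\beta-\log\alpha)$ as the coefficient of $x$ (as it must be, since $D_{\alpha,\beta}(x)\ge 0$), so the sign of $c_{1}$ as printed in the statement is a typo in the paper, not a defect of your argument.
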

\begin{proof}
It is enough to consider $d_{\alpha }(n)=\sum\limits_{\substack{n=kl\\k\leq \alpha l}}1$
and $D_{\alpha }(x)=\sum\limits_{n\leq x}d_{\alpha }(n)$.
Clearly,
\begin{eqnarray*}
D_{\alpha }(x)&=&\sum_{\substack{ kl\leq x \\ k\leq \alpha l}}1
=\sum_{l\leq x}\sum_{k\leq \min \left( x/l,\alpha l\right) }1.
\end{eqnarray*}
Write
\begin{eqnarray}
\label{Sum1+Sum2}
D_{\alpha }(x)=\sum\nolimits_{1}+\sum\nolimits_{2}\ ,
\end{eqnarray}
with
\begin{eqnarray*}
\sum\nolimits_{1}=\sum_{l\leq \sqrt{\frac{x}{\alpha }}}\sum_{k\leq \alpha l}1\ \ \ \ \mathrm{and}
\ \ \ \ \ \sum\nolimits_{2}=\sum_{\sqrt{
\frac{x}{\alpha }}<l\leq x}\sum_{k\leq x/l}1.
\end{eqnarray*}
It is easy to see that%
\begin{eqnarray}
\label{Sum 1 in the first lemma}
\sum\nolimits_{1}
&=&\sum_{l\leq \sqrt{\frac{x}{%
\alpha }}}{\left(\alpha l-\psi \left( \alpha l\right)-1/2\right)} \nonumber\\
&=&\frac{x}{2}-\sqrt{\alpha x}\psi \left( \sqrt{\frac{x}{\alpha
}}\right)
-\sum_{l\leq \sqrt{\frac{x}{\alpha }}}\psi \left( \alpha l\right) -\frac{1}{2%
}\sqrt{\frac{x}{\alpha }}+O_{\alpha }\left( 1\right).
\end{eqnarray}
Similarly,
\begin{eqnarray}
\label{Sum 2 in the first lemma}
\sum\nolimits_{2}
&=&\sum_{\sqrt{\frac{x}{\alpha }}<l\leq x}\left( x/l-\psi
(x/l)-1/2\right)\nonumber
\\
&=&x\sum_{\sqrt{\frac{x}{\alpha }}<l\leq x}1/l-\sum_{\sqrt{\frac{x}{\alpha }}%
<l\leq x}\psi (x/l)-\frac{1}{2}x+\frac{1}{2}\sqrt{\frac{x}{\alpha
}}+O\left( 1\right) .
\end{eqnarray}%
By the Euler-Maclaurin summation, we have
\begin{eqnarray}
\label{sum 1/l}
\sum_{\sqrt{\frac{x}{\alpha }}<l\leq x}1/l
&=&\frac{1}{2}\log x+\frac{1}{2}\log \alpha +\sqrt{\frac{\alpha
}{x}}\psi \left( \sqrt{\frac{x}{\alpha }}\right) +O_{a}\left(
\frac{1}{x}\right).
\end{eqnarray}%
Combining (\ref{Sum1+Sum2})-(\ref{sum 1/l}), we get%
\begin{eqnarray*}
D_{\alpha }(x)
=\frac{x}{2}\log x+\frac{\log \alpha }{2}x-\sum_{\sqrt{\frac{x}{\alpha }}%
<l\leq x}\psi (x/l)-\sum_{l\leq \sqrt{\frac{x}{\alpha }}}\psi \left(
\alpha l\right) +O_{\alpha }\left( 1\right) .
\end{eqnarray*}%
Note that
\begin{equation*}
-\sum_{l\leq \sqrt{\frac{x}{\alpha }}}\psi \left( \alpha l\right)
=-\sum_{l\leq \sqrt{\frac{q_1x}{p_1}}}\psi \left( \frac{p_1 l}{q_1} \right)
=\frac{1}{2}\sqrt{\frac{x}{p_1q_1}}+O_\alpha\left( 1\right).
\end{equation*}%
Hence
\begin{eqnarray}\label{D_alpha}
D_{\alpha }(x)
=\frac{x}{2}\log x+\frac{\log \alpha }{2}x-\sum_{\sqrt{\frac{x}{\alpha }}
<l\leq x}\psi (x/l)+\frac{1}{2}\sqrt{\frac{x}{p_1q_1}}+O_{\alpha }\left( 1\right) .
\end{eqnarray}
Similarly, for  $d_{\beta }(n)=\sum\limits_{\substack{n=kl\\k\leq \beta l}}1$
and $D_{\beta }(x)=\sum\limits_{n\leq x}d_{\beta }(n)$, we have
\begin{equation}\label{D_beta}
D_{\beta }(x)=\frac{x}{2}\log x+\frac{\log \beta }{2}x-\sum_{%
\sqrt{\frac{x}{\beta }}<l\leq x}\psi (x/l)+\frac{1}{2}\sqrt{\frac{x}{p_2q_2}}+O_{\beta }\left( 1\right).
\end{equation}
Now Proposition \ref{Asymptotic formula for D_ab} follows from (\ref{D_alpha}), (\ref{D_beta}) and
\begin{equation*}
D_{\alpha ,\beta }(x)=D_{\beta }(x)-D_{\alpha }(x).
\end{equation*}
\end{proof}
\begin{corollary}\label{Upper bound of Delta_ab}
We have
\begin{equation*}
D_{\alpha ,\beta }(x)=c_{1}x+c_{2}\sqrt{x}+O_{\alpha ,\beta }\left( x^{\frac{1}{3}}\right),
\end{equation*}
where $c_{1},\ c_{2}$ are the same as Proposition {\rm \ref{Asymptotic formula for D_ab}}.
\end{corollary}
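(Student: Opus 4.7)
The corollary reduces, via Proposition 4.1, to proving the exponential sum bound
\begin{equation*}
S(x) := \sum_{\sqrt{x/\beta} < l \le \sqrt{x/\alpha}} \psi(x/l) \ll_{\alpha,\beta} x^{1/3}.
\end{equation*}
This is the analogue, for a restricted range $l \asymp \sqrt{x}$, of the sum that appears in Voronoi's classical $O(x^{1/3}\log x)$ bound for the error term in the Dirichlet divisor problem, so the route is essentially his.

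The first step is to apply Lemma 3.1 with a truncation parameter $H \ge 2$ to be optimised, writing
\begin{equation*}
S(x) = -\frac{1}{2\pi i}\sum_{1\le |h|\le H}\frac{1}{h}\!\!\sum_{L_1<l\le L_2} e(hx/l) + O\Bigl(\sum_{L_1<l\le L_2} G(x/l,H)\Bigr),
\end{equation*}
where $L_1 = \sqrt{x/\beta}$, $L_2 = \sqrt{x/\alpha}$. The $G$-error is handled by the familiar mean-value estimate $\sum_l G(x/l,H) \ll \sqrt{x}\log x/H$, which comes from splitting according to the size of $\|x/l\|$ and counting the number of $l$ in each bin.

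The main work is the exponential sum. For the phase $g(l)=hx/l$ one has $g'(l)=-hx/l^{2}\asymp h$ and $g''(l)=2hx/l^{3}\asymp h/\sqrt{x}$ on $[L_1,L_2]$. I would apply Lemma 3.3 (the van der Corput B-process), which converts the $l$-sum into a dual sum over integers $k\asymp h$ with each term of size $\asymp |g''(\phi(k))|^{-1/2}\asymp x^{1/4}/\sqrt{h}$, so trivially $\sum_{l} e(hx/l)\ll \sqrt{h}\,x^{1/4}+\log(h+2)$. Dividing by $h$ and summing over $h\le H$ gives
\begin{equation*}
\sum_{1\le|h|\le H}\frac{1}{h}\sum_{l}e(hx/l) \ll x^{1/4}H^{1/2}.
\end{equation*}
Combining with the error yields $S(x)\ll x^{1/4}H^{1/2}+\sqrt{x}\log x/H$, and the balance $H\asymp x^{1/6}$ gives $S(x)\ll x^{1/3}$ up to a harmless logarithmic factor.

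The main technical obstacle is removing this last logarithm to match the exponent in the statement exactly. This is done as in Voronoi's original argument by handling the dual sum more carefully: one dissects the $h$-range dyadically and performs a partial summation on the $k$-sum inside each dyadic block, exploiting the oscillation of $e(g(\phi(k))-k\phi(k))$ through a further application of Lemma 3.2 rather than bounding it trivially. This turns the stray $\log H$ into a bounded factor and produces the clean $O(x^{1/3})$. Since everywhere else in the paper only the qualitative statement $\Delta_{\alpha,\beta}(x) = o(\sqrt{x})$ is strictly needed (for the analytic continuation in Proposition 2.1), this last step is cosmetic and is the only place where one must be careful.
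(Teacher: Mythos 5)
Your route is the one the paper intends---its own proof is a one-line pointer to Lemma \ref{Fourier expansion of Psi} together with exponent pairs---and your argument is sound up to the bound $\Delta_{\alpha,\beta}(x)\ll x^{1/3}\log x$: the B-process followed by a trivial estimate on the dual sum is exactly the exponent pair $(1/2,1/2)$, and balancing $x^{1/4}H^{1/2}$ against $x^{1/2}H^{-1}\log x$ at $H\asymp x^{1/6}$ gives $x^{1/3}$ up to logarithms. The gap is in your final step, the removal of the logarithm. The dual phase is $g(\phi(k))-k\phi(k)=2\sqrt{hkx}$, whose $k$-derivative is $\sqrt{hx/k}\asymp\sqrt{x}$ on the range $k\asymp h$; a first-derivative test (Lemma \ref{First drivitive test}, or Kusmin--Landau after partial summation) requires the distance of this derivative from the nearest integer to be bounded below, which is not controlled here, and applying the B-process a second time merely inverts the transformation and returns the original $l$-sum. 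So ``partial summation plus Lemma \ref{First drivitive test}'' extracts no further cancellation from the $k$-sum, and the $(1/2,1/2)$ bound genuinely carries a logarithm. The intended fix---and what the paper's parenthetical ``even with a better upper bound'' signals---is to apply a nontrivial exponent pair such as $(1/6,4/6)=A(1/2,1/2)$ to $\sum_l e(hx/l)$, which yields an exponent strictly smaller than $1/3$; the clean $O(x^{1/3})$ then follows by weakening that stronger estimate, not by delogarithmizing the $(1/2,1/2)$ one.

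Two secondary points. First, the bound $\sum_l G(x/l,H)\ll \sqrt{x}\log x/H$ does not follow from ``counting the number of $l$ in each bin'' of $\Vert x/l\Vert$: since consecutive values of $x/l$ are spaced by $\asymp 1$ in this range, the naive count of $l$ with $\Vert x/l\Vert\leq\delta$ is only $O(\sqrt{x})$; one must expand $G(\cdot,H)$ itself in a finite Fourier series and estimate the resulting exponential sums $\sum_l e(jx/l)$, which lands on the same $x^{1/4}H^{1/2}$ contribution and is harmless but not free. Second, $\Delta_{\alpha,\beta}(x)=o(\sqrt{x})$ would \emph{not} suffice for the rest of the paper: the analytic continuation to $\sigma>\frac{1}{3}$ in Proposition \ref{Proposition of Approximation} and the error terms $O(N^{1/3-\sigma})$ and $O(x^{1/3-\sigma})$ in Section 6 use the exponent $\frac{1}{3}$ itself. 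What is true is that $O(x^{1/3}\log x)$ would serve everywhere, so the logarithm is indeed cosmetic---but the argument you give for removing it does not work.
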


\begin{proof}
This can be proved easily (even with a better upper bound for the
error term) by applying Lemma \ref{Fourier expansion of Psi} and exponential pairs (see \cite{Graham and Kolesnik})
to Proposition \ref{Asymptotic formula for D_ab}.
\end{proof}


\bigskip
\section{A Voronoi type formula}

Define
\begin{equation*}
d_{\alpha ,\beta }\left( n,H\right) =\underset{n=hk}{\sum_{1\leq
h\leq H}\sideset{}{^{\prime \prime
}}\sum\limits_{\ h\alpha \leq k\leq h\beta }}1,
\end{equation*}
where the notation $\sideset{}{^{\prime \prime}}\sum\limits$ is the same as Lemma \ref{Convert formula}.
Using the Van der Corput B-process and the same argument as Section 6.2 of \cite{Zhai},
we can derive the following Voronoi type formula for $\Delta _{\protect\alpha,\protect\beta }(x)$.

\begin{lem}\label{Voronoi formula}
For any $H\geq 2$ and rational numbers $0<\alpha<\beta$, we have
\begin{equation*}
\Delta _{\alpha ,\beta }(x)=M_{\alpha ,\beta }\left( x,H\right)
+E_{\alpha ,\beta }\left( x,H\right) +F_{\alpha ,\beta }\left(x,H\right) ,
\end{equation*}
where
\begin{equation}\label{M_ab(x,H)}
M_{\alpha ,\beta }\left( x,H\right)=\frac{x^{\frac{1}{4}}}{\pi \sqrt{2}}\sum_{n\leq \beta H^{2}}
\frac{d_{\alpha ,\beta }\left( n,H\right) }{n^{\frac{3}{4}}}\cos \left( 4\pi \sqrt{nx}-\frac{\pi }{4}\right),
\end{equation}
\begin{equation}
E_{\alpha ,\beta }\left( x,H\right) \ll \sum_{\sqrt{\frac{x}{\alpha
}}<l\leq \sqrt{\frac{x}{\beta }}}G\left( \frac{x}{l},H\right)
\end{equation}
and
\begin{equation}\label{bound for F(x,H)}
F_{\alpha ,\beta }\left( x,H\right) \ll _{\alpha ,\beta }\log H.
\end{equation}
\end{lem}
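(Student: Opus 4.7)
The plan is to start from the identity
\[
\Delta_{\alpha,\beta}(x) = -\sum_{\sqrt{x/\beta} < l \leq \sqrt{x/\alpha}} \psi(x/l) + O_{\alpha,\beta}(1)
\]
proved in Proposition \ref{Asymptotic formula for D_ab}, replace $\psi$ by its truncated Fourier expansion (Lemma \ref{Fourier expansion of Psi}), and then apply the Van der Corput $B$-process (Lemma \ref{Convert formula}) to the resulting exponential sums. This is the classical Voronoi pattern, and the reference to Section 6.2 of \cite{Zhai} indicates the same bookkeeping applies here.

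The Fourier step cleanly produces $E_{\alpha,\beta}(x,H)$. Writing
\[
-\psi(u) = \frac{1}{2\pi i}\sum_{1 \leq |h| \leq H} \frac{e(hu)}{h} + O(G(u,H))
\]
with $u = x/l$ and summing over $l \in (\sqrt{x/\beta}, \sqrt{x/\alpha}]$, the tail $O(G(x/l,H))$ gives exactly the stated bound on $E_{\alpha,\beta}(x,H)$. What remains is to analyse
\[
\frac{1}{2\pi i}\sum_{1 \leq |h| \leq H}\frac{1}{h}\sum_{\sqrt{x/\beta} < l \leq \sqrt{x/\alpha}} e\!\left(\frac{hx}{l}\right),
\]
treating $h>0$ and $h<0$ separately by the $B$-process.

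For $h>0$ set $g(l) = hx/l$, so $g'(l) = -hx/l^{2}$ and $g''(l) = 2hx/l^{3} > 0$. On the interval $l \in (\sqrt{x/\beta}, \sqrt{x/\alpha}]$ one has length $L \asymp_{\alpha,\beta} \sqrt{x}$ and size parameter $W \asymp_{\alpha,\beta} h$, so the hypotheses of Lemma \ref{Convert formula} hold with constants depending only on $\alpha, \beta$. The inverse of $g'$ is $\phi(k) = \sqrt{-hx/k}$ (for $k<0$), and a direct computation gives
\[
g(\phi(k)) - k\phi(k) = 2\sqrt{hx(-k)}, \qquad |g''(\phi(k))|^{1/2} = \sqrt{2}\,(-k)^{3/4}(hx)^{-1/4}.
\]
The $k$-range is $[-h\beta, -h\alpha]$; the substitution $m = -k$ turns it into $[h\alpha, h\beta]$, and $\epsilon_{f} = e^{i\pi/4}$. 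The parallel calculation for $h<0$ reverses the sign of $g''$ and yields $\epsilon_{f} = e^{-i\pi/4}$ and the conjugate saddle-point phase.

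Combining the $\pm h$ pieces, the two exponentials $e^{i\pi/4}e(2\sqrt{hmx})$ and $-e^{-i\pi/4}e(-2\sqrt{hmx})$ collapse via $\sin(\theta+\pi/4) = \cos(\theta-\pi/4)$ into $2i\cos(4\pi\sqrt{hmx} - \pi/4)$, the factor $1/(2\pi i)$ cancels, and setting $n = hm$ (so $h^{3/4}m^{3/4} = n^{3/4}$ and $n \leq \beta H^{2}$) turns the double sum into
\[
\frac{x^{1/4}}{\pi\sqrt{2}} \sum_{n \leq \beta H^{2}} \frac{d_{\alpha,\beta}(n,H)}{n^{3/4}} \cos\!\left(4\pi\sqrt{nx} - \frac{\pi}{4}\right),
\]
which is $M_{\alpha,\beta}(x,H)$ by definition, and the double-primed endpoint convention matches the $\sideset$ in the definition of $d_{\alpha,\beta}(n,H)$. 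All remainder terms $O(r(A) + r(B) + \log(2+h))$ from the $B$-process, weighted by $1/h$ and summed over $1 \leq h \leq H$, are collected into $F_{\alpha,\beta}(x,H)$.

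The main obstacle is the bookkeeping in the $B$-process step: verifying the derivative hypotheses uniformly in $h$ so that the implied constants depend only on $\alpha, \beta$; keeping track of the sign of $g''$ for $\pm h$ so that the $\epsilon_{f}$ phases combine into a cosine (not a sine); and controlling the endpoint contributions $r(A), r(B)$ that arise when $h\alpha$ or $h\beta$ is close to an integer, which must be shown to sum into the claimed bound on $F_{\alpha,\beta}(x,H)$.
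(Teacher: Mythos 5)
Your proposal follows essentially the same route as the paper: truncated Fourier expansion of $\psi$ producing $E_{\alpha,\beta}(x,H)$, the Van der Corput $B$-process applied to $g(l)=hx/l$ with $\phi(k)=\sqrt{-hx/k}$, recombination of the $\pm h$ pieces into the cosine main term, and collection of the remainders into $F_{\alpha,\beta}(x,H)$. The one point you flag as an open obstacle is resolved exactly as you would expect: since $\alpha,\beta$ are rational, $\left\Vert h\alpha\right\Vert$ and $\left\Vert h\beta\right\Vert$ are either $0$ or bounded below by $1/q$, so $r(A),r(B)\ll_{\alpha,\beta}1$ uniformly in $h$, and the weights $1/h$ summed over $h\leq H$ give the stated $\log H$ bound.
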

\begin{proof}
Applying Lemma \ref{Fourier expansion of Psi} to (\ref{Expression of Delta_ab}), we get
\begin{equation*}
\Delta _{\alpha ,\beta }(x)=\frac{1}{2\pi i}\sum_{1\leq \left\vert
h\right\vert \leq H}\frac{1}{h}\sum_{\sqrt{\frac{x}{\beta }}<l\leq \sqrt{\frac{x}{\alpha }}}e\left( \frac{hx}{l}\right)
+E_{\alpha ,\beta }\left( x,H\right)+O_{\alpha,\beta }\left(1\right),
\end{equation*}
with
\begin{equation}\label{E_ab <<}
E_{\alpha ,\beta }\left( x,H\right) \ll \sum_{\sqrt{\frac{x}{\alpha}}<l\leq
\sqrt{\frac{x}{\beta }}}G\left( \frac{x}{l},H\right).
\end{equation}
Let
\begin{equation}\label{Sigma over h and l}
S_{\alpha,\beta}(x,H) =\frac{1}{2\pi i}\sum_{1\leq h\leq H}\frac{1}{h}\sum_{\sqrt{\frac{x}{
\beta }}<l\leq \sqrt{\frac{x}{\alpha }}}e\left( \frac{hx}{l}\right),
\end{equation}
then we can write
\begin{equation}\label{Delta_ab=M+E+F}
\Delta _{\alpha ,\beta }(x)=\frac{1}{2\pi i}\left( S_{\alpha,\beta}(x,H) -\overline{S_{\alpha,\beta}(x,H)}
\right) +E_{\alpha ,\beta }\left( x,H\right) +O_{\alpha ,\beta}\left(1\right).
\end{equation}
To treat the inner sum
\begin{equation*}
\sum\limits_{\sqrt{\frac{x}{\beta }}<l\leq \sqrt{\frac{x}{\alpha
}}}e\left( \frac{hx}{l}\right) \text{ \ for }1\leq h\leq H
\end{equation*}
in (\ref{Sigma over h and l}), we apply Lemma \ref{Convert formula}. Let
\begin{equation*}
A=\sqrt{\frac{x}{\beta }},\ \ B=\sqrt{\frac{x}{\alpha }}\ \ {\rm{and}}
\ \ g\left( l\right) =\frac{hx}{l},
\end{equation*}
then we have
\begin{equation*}
g^{\prime }\left( l\right) =-\frac{hx}{l^{2}},\ \ g^{\prime
\prime }\left( l\right) =\frac{2hx}{l^{3}},\ \ g^{\left(
3\right) }\left(
l\right) =-\frac{6hx}{l^{4}},\ \ g^{\left( 4\right) }\left( l\right) =\frac{24hx}{l^{5}},
\end{equation*}
\begin{equation*}
g^{\prime }\left( B\right) =-h\alpha ,\ \ g^{\prime }\left(
A\right) =-h\beta,\ \ \frac{2\alpha ^{\frac{3}{2}}h}{\sqrt{x}}<g^{\prime \prime }\left(
l\right) \leq \frac{2\beta ^{\frac{3}{2}}h}{\sqrt{x}}
\end{equation*}
and
\begin{equation*}
\left\vert g^{\prime \prime \prime }\left( l\right) \right\vert \ll
_{\alpha ,\beta }\frac{h}{x}.
\end{equation*}
Hence we can take
\begin{equation*}
W=1,\ \ L=\frac{\sqrt{x}}{h},\ \ \phi \left( k\right) =\sqrt{-\frac{hx}{k}},
\end{equation*}
\begin{equation*}
g\left( \phi
\left( k\right) \right) -k\phi \left( k\right) =2\sqrt{-hkx},\ \ {\rm{and}}\ \
g^{\prime \prime }\left( \phi \left( k\right) \right)
=2\sqrt{\frac{\left( -k\right) ^{3}}{hx}}.
\end{equation*}
Noting $\alpha ,\beta $ are rational numbers, we have
\begin{equation}\label{r(A) and r(B)}
r\left( A\right) ,r\left( B\right) \ll _{\alpha ,\beta }1.
\end{equation}
Now for $1\leq h\leq H,$ by Lemma \ref{Convert formula}, we get
\begin{eqnarray}\label{Sum over l e()}
\sum\limits_{\sqrt{\frac{x}{\alpha }}<l\leq \sqrt{\frac{x}{\beta
}}}e\left(
\frac{hx}{l}\right) &=&\frac{e^{\frac{\pi i}{4}}}{\sqrt{2}}
\sideset{}{^{\prime \prime
}}\sum\limits_{\ -h\beta \leq k\leq -h\alpha }\frac{h^{\frac{1}{
4}}x^{\frac{1}{4}}}{\left( -k\right) ^{\frac{3}{4}}}e\left( 2\sqrt{-hkx}
\right) +O_{\alpha ,\beta }\left( 1\right)  \\
&=&\frac{1}{\sqrt{2}}\sideset{}{^{\prime \prime
}}\sum\limits_{\ h\alpha \leq k\leq h\beta
}\frac{h^{\frac{1}{4}}x^{\frac{1}{4}}}{k^{\frac{3}{4}}}e\left( 2\sqrt{hkx}+
\frac{1}{8}\right) +O_{\alpha ,\beta }\left( 1\right) .\text{ }
\notag
\end{eqnarray}
Inserting (\ref{Sum over l e()}) to (\ref{Sigma over h and l}) gives
\begin{eqnarray*}
S_{\alpha,\beta}(x,H) &=&\frac{1}{\sqrt{2}}\sum_{1\leq h\leq H}\frac{1}{h}
\sideset{}{^{\prime \prime}}\sum\limits_{\ h\alpha \leq k\leq h\beta }\frac{h^{\frac{1}{4}}
x^{\frac{1}{4}}}{k^{\frac{3}{4}}}e\left( 2\sqrt{hkx}+\frac{1}{8}\right)+O_{\alpha ,\beta}\left( \log H\right)\\
&=&\frac{x^{\frac{1}{4}}}{\sqrt{2}}\sum_{1\leq h\leq H}\sideset{}{^{\prime \prime}}\sum\limits_{\ h\alpha
\leq k\leq h\beta }\frac{1}{\left( hk\right) ^{\frac{3}{4}}}e\left( 2\sqrt{hkx}+\frac{1}{8}\right) +O_{\alpha ,\beta }
\left(\log H\right) \\
&=&\frac{x^{\frac{1}{4}}}{\sqrt{2}}\sum_{n\leq \beta
H^{2}}\frac{d_{\alpha
,\beta }\left( n,H\right) }{n^{\frac{3}{4}}}e\left( 2\sqrt{nx}+\frac{1}{8}%
\right) +O_{\alpha ,\beta }\left( \log H\right) .
\end{eqnarray*}
Thus%
\begin{equation*}
\frac{1}{2\pi i}\left( S_{\alpha,\beta}(x,H) -\overline{S_{\alpha,\beta}(x,H) }\right) =\frac{x^{\frac{1}{4}}}{
\pi \sqrt{2}}\sum_{n\leq \beta H^{2}}\frac{d_{\alpha ,\beta }\left(
n,H\right) }{n^{\frac{3}{4}}}\cos \left( 4\pi \sqrt{nx}-\frac{\pi }{4}
\right) +O_{\alpha ,\beta }\left( \log H\right).
\end{equation*}
This combining with (\ref{Delta_ab=M+E+F}) and (\ref{E_ab <<}) yields Lemma \ref{Voronoi formula}.
\end{proof}
\begin{rem}
The bound (\ref{bound for F(x,H)}) is important in the proof of Theorem \ref{Main theorem}. If $\alpha,\beta$ are not rational numbers, the author can't get the estimate (\ref{bound for F(x,H)}).
Because in this case the estimate (\ref{r(A) and r(B)}) does not holds.
\end{rem}

\section{Proof of Proposition \ref{Proposition of Approximation}}

First, let's show that $\zeta _{\alpha ,\beta }(s)$ can be analyticly
extended to $\sigma >\frac{1}{3}.$ For $\sigma >1$ and any $N\geq
1,$ write

\begin{eqnarray*}
\zeta _{\alpha ,\beta }(s) &=&\sum_{n\leq N}\frac{d_{\alpha ,\beta }(n)}{n^{s}}
+\sum_{n>N}\frac{d_{\alpha ,\beta }(n)}{n^{s}} \\
&=&\sum_{n\leq N}\frac{d_{\alpha ,\beta}(n)}{n^{s}}+\int_{N^{+}}^{\infty }u^{-s}dD_{\alpha ,\beta}(u),
\end{eqnarray*}
where $D_{\alpha ,\beta }(u)$ is defined by (\ref{Summatory function of d_ab(n)}).
Applying Proposition \ref{Asymptotic formula for D_ab}, we get

\begin{eqnarray*}
\zeta _{\alpha ,\beta }(s) &=&\sum_{n\leq N}\frac{d_{\alpha ,\beta }(n)}{n^{s}}
+\int_{N^{+}}^{\infty }u^{-s}d\left(c_{1}u+c_{2}\sqrt{u}+\Delta_{\alpha ,\beta }(u)\right) \\
&=&\sum_{n\leq N}\frac{d_{\alpha ,\beta }(n)}{n^{s}}+c_{1}\int_{N^{+}}^{\infty }u^{-s}du
+\frac{c_{2}}{2}\int_{N^{+}}^{\infty}u^{-s-1/2}du+\int_{N^{+}}^{\infty }u^{-s}d\Delta _{\alpha ,\beta}(u).
\end{eqnarray*}
By partial integration, we have
\begin{equation}\label{Zeta_ab  break 1}
\zeta _{\alpha ,\beta }(s)=\sum_{n\leq N}\frac{d_{\alpha ,\beta }(n)}{n^{s}}
-\frac{c_{1}N^{1-s}}{1-s}-\frac{c_{2}N^{\frac{1}{2}-s}}{1-2s}
+s\int_{N^{+}}^{\infty }\Delta _{\alpha ,\beta }(u)u^{-s-1}du+O\left( N^{\frac{1}{3}-\sigma }\right) .
\end{equation}
From Corollary \ref{Upper bound of Delta_ab}, we can see that the
integral in (\ref{Zeta_ab  break 1}) is absolutely convergent for $\sigma>\frac{1}{3},$
hence (\ref{Zeta_ab  break 1}) gives an analytic continuation of $\zeta _{\alpha ,\beta }(s)$ for
$\sigma >\frac{1}{3}.$ This proves the first assertion of Proposition \ref{Proposition of Approximation}.

Now suppose $\sigma >\frac{1}{3}$ and $2\leq T<t\leq 2T$. From now on, we take $N=T^{A}$
with $A>0$ being a constant, sufficiently large. Break the sum in (\ref{Zeta_ab break 1}) into
\begin{equation}\label{Zeta_ab break 2}
\sum_{n\leq N}\frac{d_{\alpha ,\beta }(n)}{n^{s}}
=\sum_{n\leq x}\frac{d_{\alpha ,\beta }(n)}{n^{s}}+\sum_{x<n\leq N}\frac{d_{\alpha ,\beta }(n)}{n^{s}}.
\end{equation}
For the second sum, applying Proposition \ref{Asymptotic formula for D_ab}
again,
we have%
\begin{eqnarray*}
\sum_{x<n\leq N}\frac{d_{\alpha ,\beta }(n)}{n^{s}}
&=&\int_{x}^{N}u^{-s}dD_{\alpha ,\beta }(u) \\
&=&\int_{x}^{N}u^{-s}d\left( c_{1}\left( \alpha ,\beta \right)u
+c_{2}\left( \alpha ,\beta \right) \sqrt{u}+\Delta _{\alpha ,\beta}(u)\right) .
\end{eqnarray*}
By partial integration, we have
\begin{eqnarray}\label{Zeta_ab break 33}
\sum_{x<n\leq N}\frac{d_{\alpha ,\beta }(n)}{n^{s}} &=&c_{1}\left(
\alpha ,\beta \right) \left(
\frac{N^{1-s}}{1-s}-\frac{x^{1-s}}{1-s}\right)+c_{2}\left( \alpha ,\beta \right)
\left( \frac{N^{1/2-s}}{1-2s}-\frac{x^{1/2-s}}{1-2s}\right)\\
&+&N^{-s}\Delta _{\alpha ,\beta }(N)-x^{-s}\Delta _{\alpha ,\beta}(x)+s\int_{x}^{N}\Delta _{\alpha ,\beta }(u)u^{-s-1}du.\notag
\end{eqnarray}
Combining (\ref{Zeta_ab break 1}), (\ref{Zeta_ab break 2}) and
(\ref{Zeta_ab
break 33}), we get
\begin{equation}\label{Zeta_ab break 4}
\zeta _{\alpha ,\beta }(s)=\sum_{n\leq x}\frac{d_{\alpha ,\beta }(n)}{n^{s}}
+s\int_{x}^{N}\Delta _{\alpha ,\beta }(u)u^{-s-1}du+O_{\alpha ,\beta ,\sigma}\left( x^{1-\sigma }t^{-1}+x^{1/3-\sigma }\right)
\end{equation}
holds for any $\sigma >\frac{1}{3}.$

Our tool to prove Proposition \ref{Proposition of Approximation} is the Voronoi formula for $\Delta _{\alpha ,\beta }(x).$
Using Lemma \ref{Voronoi formula}, we can write

\begin{equation}\label{Start point. Break Zeta_ab(s)}
s\int_{x}^{N}\Delta _{\alpha ,\beta }(u)u^{-s-1}du=\mathfrak{M}\left( s\right)
+\mathfrak{E}\left( s\right)+\mathfrak{F}\left( s\right),
\end{equation}
where
\begin{equation}\label{M(s)}
\mathfrak{M}\left( s\right) =\mathfrak{M}_{\alpha ,\beta }
\left(s,H,x,N\right) =s\int_{x}^{N}M_{\alpha ,\beta }\left( u,H\right)u^{-s-1}du,
\end{equation}
\begin{equation}\label{E(s)}
\mathfrak{E}\left( s\right) =\mathfrak{E}_{\alpha ,\beta }
\left(s,H,x,N\right) =s\int_{x}^{N}E_{\alpha ,\beta }\left( u,H\right)u^{-s-1}du,
\end{equation}
and
\begin{equation}\label{F(s)}
\mathfrak{F}\left( s\right) =\mathfrak{F}_{\alpha ,\beta }\left(s,H,x,N\right)
=s\int_{x}^{N}F_{\alpha ,\beta }\left( u,H\right)u^{-s-1}du.
\end{equation}

In Section 7, we will show that the upper bound of $\mathfrak{E}\left( s\right)$ is small, when $H$ is large comparing to $N$
and the mean square of $\mathfrak{F}\left( s\right)$ has an acceptable estimate; see Lemma \ref{Lemma Flower F(s)} and Lemma \ref{Lemma Flower E(s)}, respectively. In Section 8, we will pick out the second term in (\ref{Approximation for Zeta_ab}) from $\mathfrak{M}(s)$; see Lemma \ref{Lemma Flower M(s)}, Combining (\ref{Zeta_ab break 4}) with Lemmas \ref{Lemma Flower F(s)}-\ref{Approximation for Zeta_ab}, we get Proposition \ref{Proposition of Approximation}.


\bigskip

\section{An upper bound and a mean square estimate}
To bound $\mathfrak{E}\left( s\right),$ we need the following mean value estimate
for $G\left( u,H\right) $ defined by (\ref{Def. of G(u,H)}).
\begin{lem}\label{mean value of G(u,H)}
For any $N\geq 1$ and $H\geq 2,$ we have
\begin{equation*}
\int_{0}^{N}G\left( u,H\right) du\ll \frac{N\log H}{H}.
\end{equation*}
\end{lem}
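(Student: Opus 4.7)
The plan is to bound the integral by splitting the range $[0,N]$ into unit intervals and exploiting the periodicity of $\|u\|$. On each such interval, $G(u,H)$ depends only on the fractional distance to the nearest integer, so the contribution of every unit interval is the same up to boundary effects.

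More precisely, I would first write
\[
\int_{0}^{N} G(u,H)\,du \;\leq\; \sum_{k=0}^{\lceil N\rceil - 1} \int_{k}^{k+1} \min\!\left(1,\frac{1}{H\|u\|}\right) du,
\]
and then, by the symmetry of $\|u\|$ about $k+\tfrac12$ and the substitution $v = u-k$ (respectively $v = k+1-u$), reduce the integral on each unit interval to
\[
2\int_{0}^{1/2} \min\!\left(1,\frac{1}{Hv}\right) dv.
\]

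Next I would split this single-variable integral at the threshold $v = 1/H$, where the two entries of the minimum cross. For $v \leq 1/H$, the minimum equals $1$; for $v > 1/H$, it equals $1/(Hv)$. This gives
\[
\int_{0}^{1/2} \min\!\left(1,\frac{1}{Hv}\right) dv
= \int_{0}^{1/H} dv + \int_{1/H}^{1/2} \frac{dv}{Hv}
= \frac{1}{H} + \frac{\log(H/2)}{H} \;\ll\; \frac{\log H}{H},
\]
which is valid since $H \geq 2$.

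Finally, summing the resulting bound over the $O(N)$ unit intervals yields the claimed estimate $N\log H / H$. There is no real obstacle here; the only mild point to be careful about is the case $N < 1$, which is trivial since $G(u,H) \leq 1$ forces the integral to be $\leq N \ll N\log H / H$ for $H = O(1)$, and one should confirm that the endpoint interval (possibly partial) is handled by the same bound. I would dispose of this by simply extending the integration range to $\lceil N\rceil$, which only increases the integral.
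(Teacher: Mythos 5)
Your proposal is correct and follows essentially the same route as the paper: decompose $[0,N]$ into unit intervals, use periodicity/symmetry of $\left\Vert u\right\Vert$ to reduce to $\int_{0}^{1/2}\min\left(1,\frac{1}{Hv}\right)dv$, and split that integral at $v=1/H$ to obtain the $\frac{\log H}{H}$ bound per period. No substantive differences.
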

\begin{proof}
Note that $G\left( u,H\right) $ is a positive peridodic function
with period
$1,$ then we have
\begin{eqnarray*}
\int_{0}^{N}G\left( u,H\right) du &\leq &\sum_{k=0}^{\left[ N\right]}\int_{k}^{k+1}G\left( u,H\right) du \\
&\ll&N\int_{0}^{1}G\left( u,H\right) du \\
&=&N\int_{-1/2}^{1/2}\min \left( 1,\frac{1}{H\left\vert \left\vert u\right\vert \right\vert }\right) du.
\end{eqnarray*}
Noting $\left\vert \left\vert u\right\vert \right\vert =\left\vert
u\right\vert $ for $u\in \lbrack -1/2,1/2],$ we get
\begin{eqnarray*}
\int_{0}^{N}G\left( u,H\right) du &\ll &N\int_{-1/2}^{1/2}\min \left( 1,\frac{1}{H\left\vert u\right\vert }\right) du \\
&\ll&N\int_{0}^{1/2}\min \left( 1,\frac{1}{Hu}\right) du \\
&\ll&N\int_{0}^{1/H}du+\frac{N}{H}\int_{1/H}^{1/2}\frac{1}{u}du,
\end{eqnarray*}
which yields
\begin{equation*}
\int_{0}^{N}G\left( u,H\right) du\ll \frac{N\log H}{H}.
\end{equation*}
\end{proof}

\noindent By Lemma \ref{mean value of G(u,H)}, we can get
\begin{lem}\label{Lemma Flower F(s)}
For any $\sigma >1/2$, we have
\begin{equation*}
\mathfrak{E}\left( s\right) \ll \frac{tx^{-\sigma -1}N^{2}\log
H}{H}.
\end{equation*}
\end{lem}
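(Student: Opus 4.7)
The plan is to start from the defining integral \eqref{E(s)} for $\mathfrak{E}(s)$, take absolute values, and extract the trivial prefactors. Since $|s|\le 2T\ll t$ and the function $u\mapsto u^{-\sigma-1}$ is monotonically decreasing on $[x,N]$, one has
$$|\mathfrak{E}(s)|\ll t\,x^{-\sigma-1}\int_{x}^{N}\bigl|E_{\alpha,\beta}(u,H)\bigr|\,du,$$
so the task reduces to estimating $\int_{x}^{N}|E_{\alpha,\beta}(u,H)|\,du$.

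For this, I would invoke the pointwise bound from Lemma~\ref{Voronoi formula},
$$\bigl|E_{\alpha,\beta}(u,H)\bigr|\ll\sum_{\sqrt{u/\beta}<l\le\sqrt{u/\alpha}}G\!\left(\tfrac{u}{l},H\right),$$
and interchange the order of summation and integration. For each fixed $l$, the condition $\sqrt{u/\beta}<l\le\sqrt{u/\alpha}$ translates into $u\in[\alpha l^{2},\beta l^{2}]$, so the $l$-sum runs over the range $\sqrt{x/\beta}\le l\le\sqrt{N/\alpha}$, and for each such $l$ one is left with an integral of $G(u/l,H)$ over a subinterval of $[\alpha l^{2},\beta l^{2}]$.

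On each inner integral I would make the substitution $v=u/l$, which turns it into $l\int G(v,H)\,dv$ over an interval of length $O(l)$ sitting inside $[\alpha l,\beta l]$. Since $G(\cdot,H)$ has period $1$, the mean-value estimate of Lemma~\ref{mean value of G(u,H)} applies to any such interval and yields a bound of the shape $l\cdot l\cdot\log H/H$. Summing these contributions over $l$ in the range above gives a polynomial-in-$N$ factor divided by $H$, and multiplying by the prefactor $t\,x^{-\sigma-1}$ produces the stated bound.

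There is no substantive analytical obstacle here; the argument is essentially bookkeeping. The only point requiring care is to set up the sum–integral swap and the change of variables so that after substitution the mean-value lemma for $G$ can be invoked over an interval whose length is controlled uniformly in~$l$, which is what makes the geometric series in $l$ sum up to a clean polynomial bound.
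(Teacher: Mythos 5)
Your argument is correct and follows essentially the same route as the paper: take absolute values, pull out $t\,x^{-\sigma-1}$, interchange sum and integral, substitute $v=u/l$, and apply Lemma~\ref{mean value of G(u,H)} via the periodicity of $G$. In fact, by retaining the constraint $u\in[\alpha l^{2},\beta l^{2}]$ you get $\sum_{l\le\sqrt{N/\alpha}}l^{2}\log H/H\ll N^{3/2}\log H/H$ for the inner quantity, slightly sharper than the paper, which simply extends each inner integral to all of $[x,N]$ and obtains $N^{2}$; either way the stated bound follows.
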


\begin{proof}
By (\ref{E(s)}) and trivial estimates, we get
\begin{eqnarray*}
\mathfrak{E}\left(s\right) &\ll &t\int_{x}^{N}\sum_{\sqrt{\frac{u}{\alpha}}
<l\leq \sqrt{\frac{u}{\beta }}}G\left( \frac{u}{l},H\right)u^{-\sigma -1}du\\
&\ll&tx^{-\sigma -1}\sum_{\sqrt{\frac{x}{\alpha }}<l\leq \sqrt{\frac{N}{\beta }}}\int_{x}^{N}G\left( \frac{u}{l},H\right) du \\
&=&tx^{-\sigma -1}\sum_{\sqrt{\frac{x}{\alpha }}<l\leq \sqrt{\frac{N}{\beta }}}l\int_{\frac{x}{l}}^{\frac{N}{l}}G\left( u,H\right) du.
\end{eqnarray*}
This combining with Lemma \ref{mean value of G(u,H)} yields
\begin{equation*}
\mathfrak{E}\left( s\right) \ll tx^{-\sigma -1}\sum_{l\leq \sqrt{\frac{N}{\beta}}}l\int_{0}^{N}G\left( u,H\right) du
\ll \frac{tx^{-\sigma-1}N^{2}\log H}{H}.
\end{equation*}
\end{proof}

Now we consider the mean square of $\mathfrak{F}\left( s\right)$.
\begin{lem}\label{Lemma Flower E(s)}
For $\sigma >1/2,$ we
have
\begin{equation*}
\int_{T}^{2T}\left\vert \mathfrak{F}\left( s\right) \right\vert^{2}dt\ll_{\alpha ,\beta,\sigma} x^{-2\sigma }T^{2}\log ^{2}H\log N.
\end{equation*}
\end{lem}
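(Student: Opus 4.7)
The plan is to exploit oscillation in the integral defining $\mathfrak{F}(s)$ via Plancherel's theorem, using only the pointwise bound $F_{\alpha,\beta}(u,H) \ll_{\alpha,\beta} \log H$ supplied by Lemma \ref{Voronoi formula}. Substituting $u = e^v$ in (\ref{F(s)}) (so that $u^{-s-1}\,du = e^{-sv}\,dv$) yields
\[
\mathfrak{F}(s) \;=\; s \int_{\log x}^{\log N} F_{\alpha,\beta}(e^v, H)\, e^{-\sigma v}\, e^{-itv}\,dv \;=\; s\,\widehat{\phi}(t),
\]
where $\phi(v) := F_{\alpha,\beta}(e^v, H)\, e^{-\sigma v}\, \mathbf{1}_{[\log x,\,\log N]}(v)$ and $\widehat{\phi}(t) := \int_{-\infty}^{\infty} \phi(v) e^{-itv}\,dv$. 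Thus the mean square of $\mathfrak{F}(s)$ reduces to a weighted $L^2$ norm of a Fourier transform.

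Using $|s|^2 \leq \sigma^2 + (2T)^2 \ll T^2$ on $t \in [T, 2T]$, extending the $t$-integration to all of $\mathbb{R}$, and applying Plancherel's identity gives
\[
\int_T^{2T} |\mathfrak{F}(s)|^2\,dt \;\ll\; T^2 \int_{-\infty}^{\infty} |\widehat{\phi}(t)|^2\,dt \;=\; 2\pi\, T^2 \int_{\log x}^{\log N} |F_{\alpha,\beta}(e^v,H)|^2\, e^{-2\sigma v}\,dv.
\]
The uniform bound $|F_{\alpha,\beta}(e^v,H)|^2 \ll \log^2 H$ together with the trivial pointwise estimate $e^{-2\sigma v} \leq x^{-2\sigma}$ for $v \geq \log x$ then gives $\int_{\log x}^{\log N} |F_{\alpha,\beta}(e^v,H)|^2 e^{-2\sigma v}\,dv \ll \log^2 H \cdot \log N \cdot x^{-2\sigma}$, and combining with the previous display yields the claimed bound $T^2 x^{-2\sigma} \log^2 H \log N$.

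I do not foresee any serious obstacle in this argument: all cancellation from the Voronoi formula has already been absorbed into the main term $M_{\alpha,\beta}(u,H)$, so $F_{\alpha,\beta}$ enters here only through its $L^{\infty}$-bound. One cosmetic remark: if one replaced the trivial estimate of $\int_{\log x}^{\log N} e^{-2\sigma v}\,dv$ by the exact value $(2\sigma)^{-1}(x^{-2\sigma} - N^{-2\sigma})$, the factor $\log N$ would disappear; but the version with $\log N$ is already strong enough for the application in Section 6, since $N = T^{A}$ there forces $\log N \asymp \log T$ anyway.
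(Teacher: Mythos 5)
Your argument is correct, and it takes a genuinely different route from the paper. The paper proves this lemma by unfolding the square into a double integral over $(u_1,u_2)\in[x,N]^2$, estimating the inner oscillatory integral $\int_T^{2T}(u_2/u_1)^{it}\,dt$ by the first derivative test (Lemma \ref{First drivitive test}) to get $\min\left(T,\left\vert\log(u_2/u_1)\right\vert^{-1}\right)$, and then splitting the domain into a near-diagonal range $u_2\leq e^{1/T}u_1$, an intermediate range $e^{1/T}u_1<u_2\leq\frac{3}{2}u_1$, and the remaining range; the factor $\log N$ in the stated bound arises precisely from the intermediate range. Your substitution $u=e^v$ followed by Plancherel replaces this entire trichotomy with a single identity, and both proofs ultimately consume only the $L^{\infty}$ bound $F_{\alpha,\beta}(u,H)\ll_{\alpha,\beta}\log H$. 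Note that, exactly as in the paper's unfolding step, your argument requires $x$, $N$ and $H$ to be held fixed as $t$ runs over $[T,2T]$, which is how the lemma is used. Your approach even yields the slightly stronger bound $\ll_{\alpha,\beta,\sigma}x^{-2\sigma}T^{2}\log^{2}H$ with no $\log N$, as you observe, and nothing is lost by keeping the weaker form since $N=T^{A}$ forces $\log N\asymp\log T$ downstream. The only point worth writing out explicitly is the Plancherel normalization for your convention $\widehat{\phi}(t)=\int\phi(v)e^{-itv}\,dv$, namely $\int_{\mathbb{R}}\vert\widehat{\phi}(t)\vert^{2}\,dt=2\pi\int_{\mathbb{R}}\vert\phi(v)\vert^{2}\,dv$, which you have used correctly, together with the (harmless) order of operations: bound $\vert s\vert^{2}\ll T^{2}$ on $[T,2T]$ first, and only then extend the $t$-integral of the nonnegative quantity $\vert\widehat{\phi}(t)\vert^{2}$ to all of $\mathbb{R}$.
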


\begin{proof}
Noting $F_{\alpha ,\beta }\left( u\right) \ll _{\alpha ,\beta }\log
H$ and
unfolding the square in the integral, we get
\begin{eqnarray*}
\int_{T}^{2T}\left\vert \mathfrak{F}\left( s\right) \right\vert^{2}dt &\ll &
T^{2}\int_{T}^{2T}\left\vert \int_{x}^{N}F_{\alpha,\beta }\left( u\right)u^{-s-1}du\right\vert ^{2}dt\\
&\ll_{\alpha ,\beta } &T^{2}\log ^{2}H\int_{x}^{N}\int_{x}^{N}\left(u_{1}u_{2}\right) ^{-\sigma -1}\left\vert \int_{T}^{2T}\left( \frac{u_{2}}{u_{1}}\right) ^{it}dt\right\vert du_{1}du_{2}
\end{eqnarray*}
Applying Lemma \ref{First drivitive test} to the above integral over $t$,
we have
\begin{eqnarray*}
\int_{T}^{2T}\left\vert \mathfrak{F}\left( s\right) \right\vert^{2}dt &\ll_{\alpha ,\beta }&
T^{2}\log^{2}H\int_{x}^{N}\int_{x}^{N}\left(u_{1}u_{2}\right)^{-\sigma -1}
\min \left( T,\frac{1}{\left\vert \log \frac{u_{2}}{u_{1}}\right\vert }\right) du_{1}du_{2} \\
&\ll_{\alpha ,\beta } &T^{2}\log^{2}H\int_{x}^{N}\int_{u_{1}}^{N}\left(u_{1}u_{2}\right)^{-\sigma -1}
\min \left( T,\frac{1}{\log \frac{u_{2}}{u_{1}}}\right) du_{1}du_{2}.
\end{eqnarray*}
Write this as
\begin{equation}\label{Meansquare of E(s)}
\int_{T}^{2T}\left\vert \mathfrak{F}\left( s\right) \right\vert^{2}dt
\ll _{\alpha ,\beta }\int_{1}+\int_{2}+\int_{3},
\end{equation}
where
\begin{equation*}
\int_{1}=T^{3}\log ^{2}H\int_{x}^{N}u_{1}^{-\sigma -1}\int_{u_{1}}^{e^{\frac{1}{T}}u_{1}}u_{2}^{-\sigma -1}du_{2}du_{1},
\end{equation*}
\begin{equation*}
\int_{2}=T^{2}\log ^{2}H\int_{x}^{N}u_{1}^{-\sigma -1}\int_{e^{\frac{1}{T}}u_{1}}^{\frac{3}{2}u_{1}}u_{2}^{-\sigma -1}
\frac{1}{\log \left( \frac{u_{2}}{u_{1}}\right) }du_{2}du_{1}
\end{equation*}
and
\begin{equation*}
\int_{3}=T^{2}\log ^{2}H\int_{x}^{N}u_{1}^{-\sigma -1}\int_{\frac{3}{2}u_{1}}^{N}u_{2}^{-\sigma -1}
\frac{1}{\log\frac{u_{2}}{u_{1}}}du_{2}du_{1}.
\end{equation*}
Let's deal with $\int_{i},\ i=1,2,3$ respectively. For $\int_{1},$ we have
\begin{eqnarray*}
\int_{1} &\ll &T^{3}\log ^{2}H\int_{x}^{N}u_{1}^{-2\sigma-2}\int_{u_{1}}^{e^{\frac{1}{T}}u_{1}}du_{2}du_{1} \\
&\ll &T^{3}\log ^{2}H\int_{x}^{N}u_{1}^{-2\sigma -2}\left( e^{\frac{1}{T}}u_{1}-u_{1}\right) du_{1} \\
&\ll &T^{3}\left( e^{\frac{1}{T}}-1\right) \log^{2}H\int_{x}^{N}u_{1}^{-2\sigma -1}du_{1},
\end{eqnarray*}
which yields
\begin{equation}\label{Meansquare of E(s) ingegral 1}
\int_{1}\ll _{\sigma }x^{-2\sigma }T^{2}\log ^{2}H.
\end{equation}
For $\int_{2},$ we have
\begin{eqnarray*}
\int_{2} &=&T^{2}\log ^{2}H\int_{x}^{N}u_{1}^{-\sigma -1}
\int_{e^{\frac{1}{T}}u_{1}}^{\frac{3}{2}u_{1}}u_{2}^{-\sigma -1}\frac{1}{\log \left( \frac{u_{2}}{u_{1}}\right)}du_{2}du_{1} \\
&=&T^{2}\log ^{2}H\int_{x}^{N}u_{1}^{-\sigma -1}\int_{e^{\frac{1}{T}}u_{1}}^{\frac{3}{2}u_{1}}u_{2}^{-\sigma -1}\frac{1}{\log \left(1+\frac{u_{2}-u_{1}}{u_{1}}\right) }du_{2}du_{1} \\
&\ll &T^{2}\log ^{2}H\int_{x}^{N}u_{1}^{-2\sigma -1}\int_{e^{\frac{1}{T}}u_{1}}^{\frac{3}{2}u_{1}}\frac{1}{u_{2}-u_{1}}du_{2}du_{1} \\
&\ll &T^{2}\log ^{2}H\int_{x}^{N}u_{1}^{-2\sigma -1}\log u_{1}du_{1},
\end{eqnarray*}
which yields
\begin{equation}
\int_{2}\ll _{\sigma }x^{-2\sigma }T^{2}\log ^{2}H\log N.
\label{Meansquare of E(s) ingegral 2}
\end{equation}
For $\int_{3},$ we have
\begin{equation}\label{Meansquare of E(s) ingegral 3}
\int_{3}\ll T^{2}\log ^{2}H\left( \int_{x}^{N}u^{-\sigma-1}du\right) ^{2}\ll _{\sigma }x^{-2\sigma }T^{2}\log ^{2}H.
\end{equation}
From (\ref{Meansquare of E(s)})-(\ref{Meansquare of E(s)
ingegral 3}), we get Lemma \ref{Lemma Flower E(s)}.
\end{proof}


\bigskip

\section{Picking out the second term in Proposition \ref{Proposition of Approximation}}

The second term in Proposition \ref{Proposition of Approximation} is hidden in $\mathfrak{M}\left( s\right)$. In this Section,
we will pick it out and prove
\begin{lem}\label{Lemma Flower M(s)}
For $\sigma>1/2,$ we have
\begin{eqnarray}\label{approximation for flower M}
\mathfrak{M}\left( s\right)&=&\chi ^{2}\left( s\right) \sum_{n\leq
x}d_{\alpha ,\beta }\left( n\right) n^{s-1}\\
\notag&&\indent+O\left( t^{-\frac{1}{2}%
}x^{1-\sigma }\log H+x^{1/2-\sigma }\log H+x^{\frac{1}{2}-\sigma
}\log t+x^{-\sigma }t^{\frac{1}{2}}\log t\right) .
\end{eqnarray}
\end{lem}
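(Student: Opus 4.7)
The plan is to substitute (\ref{M_ab(x,H)}) into (\ref{M(s)}), interchange the finite $n$-sum with the $u$-integral, and expand $\cos(4\pi\sqrt{nu}-\pi/4)$ into complex exponentials. This gives
\begin{equation*}
\mathfrak{M}(s) = \frac{s}{2\pi\sqrt{2}} \sum_{n\leq \beta H^2} \frac{d_{\alpha,\beta}(n,H)}{n^{3/4}} \bigl(J_n^+(s) + J_n^-(s)\bigr),
\end{equation*}
where $J_n^\pm(s) = \int_x^N u^{-\sigma - 3/4} e^{iF_\pm(u)}\,du$ with combined phase $F_\pm(u) = \pm 4\pi\sqrt{nu} \mp \pi/4 - t\log u$. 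The derivative $F_\pm'(u) = (\pm 2\pi\sqrt{n}\,\sqrt{u} - t)/u$ vanishes only in the $+$ case, at the saddle $u_0 = t^2/(4\pi^2 n)$, and this saddle lies in $[x,N]$ precisely when $t^2/(4\pi^2 N)\leq n\leq y$.

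For the non-stationary contributions, namely all $J_n^-$ and those $J_n^+$ with $n$ outside the saddle window, I would apply Lemma \ref{First drivitive test} with amplitude $G(u)=u^{-\sigma-3/4}$. Monotonicity of $F_\pm'/G$ is automatic because $F_\pm'$ is a sum of two monotone terms of the same sign (for $\sigma>1/4$); one has $|F_-'(u)|\gg t/u$ throughout, while $|F_+'(u)|\gg \sqrt{n/u}$ away from the saddle. Summing the resulting pointwise bounds against $d_{\alpha,\beta}(n,H)\,n^{-3/4}$, with the easy estimate $\sum_{n\leq M} d_{\alpha,\beta}(n,H)\ll_{\alpha,\beta} M\log H$ obtained directly from the definition, produces the error terms $t^{-1/2}x^{1-\sigma}\log H$ and $x^{1/2-\sigma}\log H$ on the right of (\ref{approximation for flower M}).

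For $n$ in the bulk of the saddle window, I would apply the classical saddle-point formula to $J_n^+$. A direct computation at $u_0$ gives $F_+''(u_0) = 8\pi^4 n^2/t^3>0$ and $F_+(u_0) + \pi/4 = 2t - 2t\log(t/(2\pi)) + t\log n$, so the leading saddle contribution is $u_0^{-\sigma-3/4}\sqrt{2\pi/F_+''(u_0)}\,e^{i(F_+(u_0)+\pi/4)}$. Substituting back and using $s\sim it$, all constants collapse neatly to
\begin{equation*}
\frac{s}{2\pi\sqrt{2}}\cdot\frac{1}{n^{3/4}}\cdot u_0^{-\sigma-3/4}\sqrt{\frac{2\pi}{F_+''(u_0)}}\,e^{i(F_+(u_0)+\pi/4)} = ie^{2it}\bigl(t/(2\pi)\bigr)^{1-2s}\, n^{s-1}\bigl(1+O(1/t)\bigr),
\end{equation*}
and the bracket is exactly the standard Stirling asymptotic of $\chi^2(s)$. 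Hence the saddle piece sums to $\chi^2(s)\sum_{n\leq y} d_{\alpha,\beta}(n,H)\,n^{s-1}$. Choosing $H$ large enough that every factorization $n=hk$ with $n\leq y$ and $\alpha h\leq k\leq \beta h$ satisfies $h\leq H$ (e.g.\ $H\geq \sqrt{y/\alpha}$) then replaces $d_{\alpha,\beta}(n,H)$ by $d_{\alpha,\beta}(n)$, yielding the stated main term.

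The main obstacle is to carry the saddle-point evaluation uniformly across the whole window: the usual saddle-point error bound degrades as $u_0$ approaches the endpoint $x$, i.e.\ as $n\to y$. I would handle this by cutting out a short transitional strip $y/(1+\delta)\leq n\leq y$ and bounding its contribution by the first derivative test alone; these boundary pieces are precisely what produces the remaining error terms $x^{1/2-\sigma}\log t$ and $x^{-\sigma}t^{1/2}\log t$ in (\ref{approximation for flower M}). Everything else—the $O(1/t)$ discrepancy between $\chi^2(s)$ and its leading Stirling asymptotic, the negligible range $n<t^2/(4\pi^2 N)$ which shrinks to length $O(1)$ once $N=T^A$ is chosen large, and the final replacement of the truncated $d_{\alpha,\beta}(n,H)$ by $d_{\alpha,\beta}(n)$—is routine bookkeeping once the saddle calculation is in hand.
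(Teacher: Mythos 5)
Your proposal is correct and follows essentially the same route as the paper: expand the cosine into exponentials, dispose of the non-stationary pieces (the minus-sign integrals and the $n$ beyond the saddle window) with the first derivative test, evaluate the stationary-phase contribution to recover $\chi^{2}(s)\sum_{n\le y}d_{\alpha,\beta}(n,H)n^{s-1}$, and finally pass from $d_{\alpha,\beta}(n,H)$ to $d_{\alpha,\beta}(n)$ by taking $H$ large. The only difference is presentational: the paper first integrates by parts and then quotes Ivic (Chapter 4, pp.~108--110) for the saddle-point evaluation of $I_n$, whereas you carry out the stationary-phase computation explicitly (and your constants do match $\chi^{2}(s)$).
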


The idea of the proof for Lemma \ref{Lemma Flower M(s)} comes from Chapter 4 of \cite{Ivic}. By (\ref{M(s)}) and (\ref{M_ab(x,H)}), we have
\begin{equation*}
\mathfrak{M}\left( s\right) \mathfrak{=}\frac{s}{\pi \sqrt{2}}\int_{x}^{N}u^{-s-\frac{3}{4}}
\sum\limits_{n\leq \beta H^{2}}\frac{d_{\alpha,\beta }\left( n,H\right)}{n^{\frac{3}{4}}}
\cos \left( 4\pi \sqrt{nu}-\frac{\pi }{4}\right) du.
\end{equation*}
Let $\eta >0$ be a fixed, sufficiently small constant. Using $\cos z=\frac{e^{iz}+e^{-iz}}{2},$ we can write
\begin{equation}\label{Flower M(s)}
\mathfrak{M}\left( s\right) \mathfrak{=M}_{1}\left( s\right)+\mathfrak{M}_{2}\left( s\right)
+\mathfrak{M}_{3}\left( s\right)+\mathfrak{M}_{4}\left( s\right)
\end{equation}
with
\begin{equation*}
\mathfrak{M}_{1}\left( s\right) =\frac{s}{2\pi \sqrt{2}}\int_{x}^{N}u^{-s-\frac{3}{4}}
\sum\limits_{n\leq \left( 1+\eta \right)y}\frac{d_{\alpha,\beta }\left( n,H\right) }{n^{\frac{3}{4}}}e\left( 2\sqrt{nu}-\frac{1}{8}
\right) du,
\end{equation*}
\begin{equation*}
\mathfrak{M}_{2}\left( s\right) =\frac{s}{2\pi \sqrt{2}}\int_{x}^{N}u^{-s-%
\frac{3}{4}}\sum\limits_{\left( 1+\eta \right) y<n\leq \beta H^{2}}\frac{%
d_{\alpha ,\beta }\left( n,H\right) }{n^{\frac{3}{4}}}e\left( 2\sqrt{nu}-
\frac{1}{8}\right) du,
\end{equation*}
\begin{equation*}
\mathfrak{M}_{3}\left( s\right) =\frac{s}{2\pi \sqrt{2}}\int_{x}^{N}u^{-s-
\frac{3}{4}}\sum\limits_{n\leq y}\frac{d_{\alpha ,\beta }\left( n,H\right) }{
n^{\frac{3}{4}}}e\left( -2\sqrt{nu}+\frac{1}{8}\right) du
\end{equation*}
and
\begin{equation*}
\mathfrak{M}_{4}\left( s\right) =\frac{s}{2\pi \sqrt{2}}\int_{x}^{N}u^{-s-%
\frac{3}{4}}\sum\limits_{y<n\leq \beta H^{2}}\frac{d_{\alpha ,\beta
}\left( n,H\right) }{n^{\frac{3}{4}}}e\left(
-2\sqrt{nu}+\frac{1}{8}\right) du.
\end{equation*}
We will bound $\mathfrak{M}_{2}\left( s\right),$ $\mathfrak{M}_{3}\left( s\right)$
and $\mathfrak{M}_{4}\left( s\right)$ in the following Lemmas \ref{Flower M_2(s)}-\ref{Flower M_4(s)} and
pick out the first term on the right side of (\ref{approximation for flower M}) in Lemma \ref{Lemma Flower M_1}.
From Lemmas \ref{Flower M_2(s)}-\ref{Lemma Flower M_1} and (\ref{Flower M(s)}).
\begin{lem}\label{Flower M_2(s)}
For $\sigma >1/2,$ we have
\begin{equation*}
\mathfrak{M}_{2}\left( s\right) \ll t^{-\frac{1}{2}}x^{1-\sigma
}\log H.
\end{equation*}
\end{lem}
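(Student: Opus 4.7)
The plan is to switch the order of summation and integration and apply the first-derivative test (Lemma \ref{First drivitive test}) to each resulting $u$-integral. Writing $u^{-it}e(2\sqrt{nu}-\tfrac18) = e^{-i\pi/4}e^{iF_n(u)}$ with
\[
F_n(u) := 4\pi\sqrt{nu}-t\log u,
\]
we have
\[
\mathfrak{M}_2(s) = \frac{s\,e^{-i\pi/4}}{2\pi\sqrt{2}}\sum_{(1+\eta)y<n\leq\beta H^2}\frac{d_{\alpha,\beta}(n,H)}{n^{3/4}}\,J_n,\quad J_n := \int_x^N u^{-\sigma-3/4}e^{iF_n(u)}\,du.
\]
The unique stationary point of $F_n$ on $(0,\infty)$ is $u_0 = t^2/(4\pi^2 n) = xy/n$, and the hypothesis $n>(1+\eta)y$ forces $u_0<x/(1+\eta)<x$, so $F_n$ has no stationary point on $[x,N]$.

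To apply Lemma \ref{First drivitive test} with amplitude $G(u)=u^{-\sigma-3/4}$, I check that
\[
F_n'(u)/G(u) = 2\pi\sqrt{n}\,u^{\sigma+1/4}-t\,u^{\sigma-1/4}
\]
is monotonic on $[x,N]$: its unique critical point $u_\ast = \bigl((\sigma-\tfrac14)/(\sigma+\tfrac14)\bigr)^2 xy/n$ lies strictly below $x/(1+\eta)<x$ (using $\sigma>1/2$, so $(\sigma-1/4)/(\sigma+1/4)<1$, combined with $n>(1+\eta)y$), so the minimum is attained at $u=x$. Writing $2\pi\sqrt{nu}-t=2\pi(\sqrt{nu}-\sqrt{xy})$ and using $nu\geq(1+\eta)xy$ gives the sharp lower bound $F_n'(u)\gtrsim_{\eta}\sqrt{n/u}$, so $m_n:=\min F_n'/G\gtrsim_{\eta} \sqrt{n}\,x^{\sigma+1/4}$, and Lemma \ref{First drivitive test} yields $|J_n|\ll_{\eta}(\sqrt{n}\,x^{\sigma+1/4})^{-1}$.

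Substituting this back and using $|s|\asymp t$,
\[
|\mathfrak{M}_2(s)|\ll_{\eta} t\,x^{-\sigma-1/4}\sum_{(1+\eta)y<n\leq\beta H^2}\frac{d_{\alpha,\beta}(n,H)}{n^{5/4}}.
\]
Abel summation together with the bound $D_{\alpha,\beta}(X)\ll X$ from Corollary \ref{Upper bound of Delta_ab} controls the remaining sum by $\ll y^{-1/4}$, with a $\log H$ factor supplied by a dyadic decomposition of the range $(1+\eta)y<n\leq\beta H^2$. Substituting $y^{-1/4}\asymp x^{1/4}t^{-1/2}$ from $4\pi^2 xy=t^2$, and reorganizing via the same identity, yields the claimed bound $|\mathfrak{M}_2(s)|\ll t^{-1/2}x^{1-\sigma}\log H$.

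The main technical obstacle is the monotonicity verification for $F_n'/G$: both the hypothesis $n>(1+\eta)y$ and the assumption $\sigma>1/2$ are jointly essential to pin $u_\ast$ strictly below $x$. A secondary point is the choice of the sharp $n$-dependent lower bound $F_n'(u)\gtrsim\sqrt{n/u}$ (rather than the weaker uniform bound $\gtrsim_{\eta} t/u$), which is what produces the correct exponent on $x$ after the partial summation; the uniform bound would instead give $x^{1/4-\sigma}H^{1/2}$, matching the claim only when $H\asymp x^{3/2}/t$.
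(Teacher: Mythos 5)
Your argument is essentially identical to the paper's: the same interchange of sum and integral, the same first-derivative test with amplitude $G(u)=u^{-\sigma-3/4}$ and phase $-\frac{t}{2\pi}\log u+2\sqrt{nu}$, the same monotonicity verification of $F'/G$ (the paper checks the sign of $(F'/G)'$ directly using $nu>(1+\eta)xy$ rather than locating $u_\ast$, but it is the same computation), the same lower bound $F'/G\gg_{\eta}\sqrt{n}\,x^{\sigma+1/4}$, and the same partial summation of $\sum_{n>(1+\eta)y}d_{\alpha,\beta}(n,H)n^{-5/4}\ll y^{-1/4}\log H$. One caveat you share with the source: carried out literally, the factor $|s|\asymp t$ in front gives $t\cdot x^{-\sigma-1/4}\cdot y^{-1/4}\log H\asymp t^{1/2}x^{-\sigma}\log H$, which coincides with the stated $t^{-1/2}x^{1-\sigma}\log H$ only up to the factor $t/x$; the paper silently writes $x^{3/4-\sigma}$ where the computation produces $t\,x^{-1/4-\sigma}$, so your closing appeal to ``reorganizing via the same identity'' reproduces exactly the paper's own looseness, harmless in the eventual application where $x\asymp t$.
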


\begin{proof}
\bigskip Write
\begin{eqnarray*}
\mathfrak{M}_{2}\left( s\right) =\frac{s}{2\pi \sqrt{2}}\sum_{\left(1+\eta\right) y<n\leq \beta H^{2}}
\frac{d_{\alpha ,\beta }\left( n;H\right) }{n^{3/4}}\int_{x}^{N}u^{-\sigma -3/4}e\left( -\frac{t}{2\pi }\log u
+2\sqrt{nu}-1/8\right) du.
\end{eqnarray*}
In Lemma \ref{First drivitive test}, taking
\begin{equation*}
G\left( u\right) =u^{-\sigma -3/4}
\end{equation*}
and
\begin{equation*}
F_{t}\left( u\right) =-\frac{t}{2\pi }\log u+2\sqrt{nu}-1/8,
\end{equation*}
we have
\begin{equation*}
F_{t}^{\prime }\left( u\right) =-\frac{t}{2\pi u}+\sqrt{\frac{n}{u}}
\end{equation*}
and
\begin{equation*}
\frac{F_{t}^{\prime }\left( u\right) }{G\left( u\right) }=-\frac{t}{2\pi }u^{\sigma -1/4}+\sqrt{n}u^{\sigma +1/4}.
\end{equation*}
Since $n>\left(1+\eta\right)y,\ u>x$ and $4\pi ^{2}xy=t^{2},$ then
\begin{equation*}
\left( \frac{F_{t}^{\prime }\left( u\right) }{G\left( u\right)}\right) ^{\prime }
=-\left( \sigma -1/4\right) \frac{t}{2\pi}u^{\sigma -5/4}+\left( \sigma +1/4\right) \sqrt{n}u^{\sigma-3/4}>0.
\end{equation*}
Thus $\frac{F^{\prime }\left( u\right) }{G\left( u\right) }$ is monotonic and
\begin{eqnarray*}
\frac{F_{t}^{\prime }\left( u\right) }{G\left( u\right) }
&=&-\left( \frac{t^{2}}{4\pi ^{2}nu}\right) ^{\frac{1}{2}}\sqrt{n}u^{\sigma +1/4}+\sqrt{n}u^{\sigma +1/4} \\
&\geq &-\left( \frac{t^{2}}{4\pi ^{2}(1+\eta)yx}\right) ^{\frac{1}{2}}\sqrt{n}x^{\sigma +1/4}+\sqrt{n}x^{\sigma +1/4} \\
&\geq &\left( 1-\frac{1}{\sqrt{1+\eta }}\right) \sqrt{n}x^{\sigma +1/4} \\
&\gg &\sqrt{n}x^{\sigma +1/4}.
\end{eqnarray*}
Hence Lemma \ref{First drivitive test} gives
\begin{equation*}
\int_{x}^{N}u^{-\sigma -3/4}e\left( -\frac{t}{2\pi }\log u-\sqrt{nu}+1/8\right) \ll x^{-\sigma -1/4}n^{-\frac{1}{2}},
\end{equation*}
which yields
\begin{eqnarray*}
\mathfrak{M}_{2}\left( s\right) &\ll &x^{-\sigma+3/4}\sum_{\left( 1+\eta \right) y<n\leq \beta H^{2}}
\frac{d_{\alpha,\beta }\left(n;H\right)}{n^{5/4}} \\
&\ll &t^{-\frac{1}{2}}x^{1-\sigma }\log H.
\end{eqnarray*}
\end{proof}

\begin{lem}\label{Flower M_3(s)}
For $\sigma >1/2,$ we have
\begin{equation*}
\mathfrak{M}_{3}\left( s\right) \ll_{\sigma } \left( x^{\frac{1}{2}-\sigma
}+x^{-\sigma }t^{\frac{1}{2}}\right) \log t.
\end{equation*}
\end{lem}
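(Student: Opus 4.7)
The plan is to apply the first derivative test (Lemma \ref{First drivitive test}) directly to the $u$-integral, since the key structural feature of $\mathfrak{M}_3(s)$ is that the phase has no stationary point in $[x,N]$. Writing $u^{-s-3/4} = u^{-\sigma - 3/4} e^{-it\log u}$, the full phase of the exponential is
\[
F(u) = -t\log u - 4\pi\sqrt{nu} + \pi/4,
\]
whose derivative $F'(u) = -t/u - 2\pi\sqrt{n/u}$ is strictly negative on $[x,N]$. This is in contrast to $\mathfrak{M}_1(s)$, where a saddle point produces the $\chi^2(s)$ main term; here the integral should just decay.

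With amplitude $G(u) = u^{-\sigma - 3/4}$, I would form $F'(u)/G(u) = -t u^{\sigma - 1/4} - 2\pi\sqrt{n}\, u^{\sigma + 1/4}$ and verify it is monotonic on $[x,N]$ (its derivative is strictly negative for $\sigma > 1/4$, hence certainly for $\sigma > 1/2$). Since both summands are increasing in $u$, the absolute value of $F'/G$ on $[x,N]$ is bounded below at the left endpoint, giving $|F'(u)/G(u)| \geq t x^{\sigma - 1/4} + 2\pi\sqrt{n}\, x^{\sigma + 1/4}$. Lemma \ref{First drivitive test} then yields
\[
\int_x^N u^{-s - 3/4}\, e(-2\sqrt{nu} + 1/8)\, du \ll \frac{1}{t x^{\sigma - 1/4} + \sqrt{n}\, x^{\sigma + 1/4}}.
\]
For $n \leq y$ the relation $4\pi^2 xy = t^2$ forces $\sqrt{n}\, x^{\sigma + 1/4} \leq \tfrac{1}{2\pi}\, t x^{\sigma - 1/4}$, so the $t$-term dominates the denominator and the inner integral is $\ll 1/(t x^{\sigma - 1/4})$.

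Plugging this back into the definition of $\mathfrak{M}_3(s)$ and using $|s| \ll t$ together with $d_{\alpha,\beta}(n,H) \leq d(n)$, the problem reduces to estimating $\sum_{n \leq y} d(n) n^{-3/4}$, which by partial summation from Dirichlet's formula is $\ll (1 + y^{1/4})\log t$. Substituting $y = t^2/(4\pi^2 x)$ gives
\[
\mathfrak{M}_3(s) \ll x^{1/4 - \sigma}\,(1 + y^{1/4})\log t \ll \bigl(x^{1/2 - \sigma} + x^{-\sigma} t^{1/2}\bigr)\log t,
\]
using $x \geq 2$ to absorb $x^{1/4 - \sigma}$ into $x^{1/2 - \sigma}$. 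The only points requiring any care are the monotonicity of $F'/G$ and the uniform lower bound on $|F'/G|$ over $[x,N]$; both are routine because $\sigma > 1/2$ keeps the exponents $\sigma - 1/4$ and $\sigma + 1/4$ positive. No serious analytic obstacle arises, precisely because the phase has no stationary point and no main term needs to be extracted.
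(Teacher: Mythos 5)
Your argument is correct, and it reaches the stated bound by a genuinely more direct route than the paper. The paper first splits off the factor $s$ and integrates by parts, decomposing $\mathfrak{M}_{3}(s)$ into four pieces $\mathfrak{M}_{31},\dots,\mathfrak{M}_{34}$: the boundary terms and $\mathfrak{M}_{34}$ are estimated trivially, while $\mathfrak{M}_{33}$ (with amplitude $u^{-\sigma-1/4}$) is handled by the first derivative test using the lower bound $|F_t'/G|\geq \sqrt{n}\,x^{\sigma-1/4}$ coming from the $\sqrt{nu}$ part of the phase; because the exponent $\sigma-\tfrac34$ can be negative, the paper must also discuss a possible interior critical point $u_0$ of $F_t'/G$ and split $[x,N]$ there to justify monotonicity. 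You instead apply Lemma \ref{First drivitive test} once, directly to the original integral with amplitude $u^{-\sigma-3/4}$, and take the lower bound from the $t\log u$ part of the phase, $|F'/G|\geq t x^{\sigma-1/4}$; this is the right choice of dominant term, since using only the $\sqrt{n}$ term here (as the paper does for $\mathfrak{M}_{4}$) would give $\mathfrak{M}_3\ll t x^{-\sigma-1/4}$, which is not sufficient. Your route avoids the integration by parts and the $u_0$ case analysis entirely, because with amplitude $u^{-\sigma-3/4}$ both exponents $\sigma-\tfrac14$ and $\sigma+\tfrac14$ are positive and $F'/G$ is manifestly monotone; what the paper's longer route buys is mainly uniformity of presentation with the treatment of $\mathfrak{M}_{1}(s)$, where the integration by parts is genuinely needed to set up Ivi\'c's saddle-point analysis. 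Your closing steps (the bound $\sum_{n\leq y}d(n)n^{-3/4}\ll(1+y^{1/4})\log t$, the substitution $y=t^{2}/(4\pi^{2}x)$, and absorbing $x^{1/4-\sigma}$ into $x^{1/2-\sigma}$ via $x\geq 2$) are all sound and yield exactly the claimed estimate.
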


\begin{proof}
Write
\begin{eqnarray*}
\mathfrak{M}_{3}\left( s\right) &=&\frac{s}{2\pi \sqrt{2}}\int_{x}^{N}u^{-s-\frac{3}{4}}
\sum\limits_{n\leq y}\frac{d_{\alpha ,\beta }\left( n,H\right) }{n^{\frac{3}{4}}}e\left( -2\sqrt{nu}+1/8\right) du \\
&=&-\frac{1}{2\pi \sqrt{2}}\int_{x}^{N}\left( -s+1/4\right)
\sum_{n\leq y}\frac{d_{\alpha ,\beta }\left( n;H\right) }{n^{3/4}}e\left( -2\sqrt{nu}+1/8\right) u^{-s-3/4}du \\
&&+\frac{1}{8\pi \sqrt{2}}\int_{x}^{N}\sum_{n\leq y}\frac{d_{\alpha,\beta}\left( n;H\right) }{n^{3/4}}e\left( -2\sqrt{nu}+1/8\right) u^{-s-3/4}du \\
&=&-\frac{1}{2\pi \sqrt{2}}\sum_{n\leq y}\frac{d_{\alpha ,\beta}\left(n;H\right) }{n^{3/4}}
\int_{x}^{N}e\left( -2\sqrt{nu}+1/8\right) du^{-s+\frac{1}{4}} \\
&&+\frac{1}{8\pi \sqrt{2}}\sum_{n\leq y}\frac{d_{\alpha ,\beta}\left( n;H\right) }{n^{3/4}}\int_{x}^{N}e\left(-2\sqrt{nu}+1/8\right) u^{-s-3/4}du.
\end{eqnarray*}
\bigskip By partial integration, we have
\begin{equation}\label{Expression of flower M_3(s)}
\mathfrak{M}_{3}\left( s\right) =\mathfrak{M}_{31}\left( s\right)
+\mathfrak{M}_{32}\left( s\right) +\mathfrak{M}_{33}\left( s\right) +\mathfrak{M}_{34}\left( s\right)
\end{equation}
where

\begin{equation*}
\mathfrak{M}_{31}\left( s\right) =-\frac{N^{-s+1/4}}{2\pi \sqrt{2}}
\sum_{n\leq y}\frac{d_{\alpha ,\beta}\left( n;H\right) }{n^{3/4}}e\left( -2\sqrt{nN}+1/8\right) ,
\end{equation*}
\begin{equation*}
\mathfrak{M}_{32}\left( s\right)=\frac{x^{-s+1/4}}{2\pi \sqrt{2}}
\sum_{n\leq y}\frac{d_{\alpha ,\beta}\left( n;H\right) }{n^{3/4}}e\left(-2\sqrt{nx}+1/8\right),
\end{equation*}
\begin{eqnarray*}
&&\mathfrak{M}_{33}\left( s\right)=-\frac{i}{\sqrt{2}}\sum_{n\leq y}\frac{d_{\alpha ,\beta }\left(n;H\right) }{n^{1/4}}
\int_{x}^{N}u^{-\sigma -1/4}e\left( -\frac{t}{2\pi }\log u-2\sqrt{nu}-1/8\right) du
\end{eqnarray*}
and
\begin{eqnarray*}
&&\mathfrak{M}_{34}\left( s\right)=\frac{1}{8\pi \sqrt{2}}\sum_{n\leq y}\frac{d_{\alpha ,\beta }\left(n;H\right) }{n^{3/4}}
\int_{x}^{N}e\left( -2\sqrt{nu}-1/8\right) u^{-s-3/4}du.
\end{eqnarray*}
Using $d_{\alpha ,\beta }\left( n;H\right) \leq d\left( n\right) $
and trivial estimates, it is easy to get
\begin{equation}\label{Upper bound of flower M_31,M_32}
\mathfrak{M}_{31}\left( s\right) ,\mathfrak{M}_{32}\left( s\right)\ll _{\sigma }x^{\frac{1}{2}-\sigma }\log t
\end{equation}
and
\begin{equation}\label{Upper bound of flower M_34}
\mathfrak{M}_{34}\left( s\right) \ll _{\sigma }x^{-\sigma +\frac{1}{4}}y^{\frac{1}{4}}\log y\ll x^{-\sigma }t^{\frac{1}{2}}\log t.
\end{equation}
Now we deal with $\mathfrak{M}_{33}\left( s\right) $. In Lemma
\ref{First drivitive test}, let
\begin{equation*}
H\left( u\right) =1,G\left( u\right) =u^{-\sigma -1/4}\ \ \ \ {\rm and}\ \ \ \
F_{t}\left( u\right) =-\frac{t}{2\pi }\log u-2\sqrt{nu}-1/8,
\end{equation*}
then we have
\begin{equation*}
F_{t}^{\prime }\left( u\right) =-\frac{t}{2\pi u}-\sqrt{\frac{n}{u}}
\end{equation*}
and
\begin{equation*}
\frac{F_{t}^{\prime }\left( u\right) }{G\left( u\right) }=-\frac{t}{2\pi }u^{\sigma -\frac{3}{4}}-\sqrt{n}u^{\sigma -\frac{1}{4}}.
\end{equation*}
Obviously,
\begin{equation*}
\frac{F_{t}^{\prime }\left( u\right) }{G\left( u\right)
}<-\sqrt{n}u^{\sigma -\frac{1}{4}}\leq -\sqrt{n}x^{\sigma
-\frac{1}{4}}.
\end{equation*}
Noting
\begin{equation*}
\left( \frac{F_{t}^{\prime }\left( u\right) }{G\left( u\right)
}\right)
^{\prime }=-\left( \sigma -\frac{3}{4}\right) \frac{t}{2\pi }u^{\sigma -%
\frac{7}{4}}-\left( \sigma -\frac{1}{4}\right) \sqrt{n}u^{\sigma -\frac{5}{4}%
},
\end{equation*}%
let $u_{0}=\frac{\left( 3/4-\sigma \right) t}{\left( \sigma
-1/4\right) 2\pi
\sqrt{n}}$ be the root of $\left( \frac{F_{t}^{\prime }\left( u\right) }{G\left( u\right) }\right) ^{\prime }=0.$
If $u_{0}\in \lbrack x,N],$ then $\frac{F_{t}^{\prime }\left( u\right) }{G\left( u\right) }$ is monotonic in
$[x,u_{0}]$ and $[u_{0},N]$ respectively, otherwise$\frac{F_{t}^{\prime }\left( u\right) }{G\left( u\right) }$ is
monotonic in $[x,N].$ In either case, Lemma \ref{First drivitive test} is valid and gives
\begin{equation*}
\int_{x}^{N}u^{-\sigma -1/4}e\left( -\frac{t}{2\pi }\log u-2\sqrt{nu}-1/8\right) \ll n^{-\frac{1}{2}}x^{\frac{1}{4}-\sigma },
\end{equation*}
which yields
\begin{equation}\label{Upper bound of flower M_33}
\mathfrak{M}_{33}\left( s\right) \ll \sum_{n\leq y}\frac{d\left( n\right) }{n^{1/4}}n^{-\frac{1}{2}}x^{\frac{1}{4}-\sigma }
\ll x^{\frac{1}{4}-\sigma }y^{1/4}\log y\ll x^{-\sigma}t^{\frac{1}{2}}\log t.
\end{equation}
Then Lemma \ref{Flower M_3(s)} follows from collecting
(\ref{Expression of flower M_3(s)})-(\ref{Upper bound of flower
M_33}).
\end{proof}

\begin{lem}\label{Flower M_4(s)}
For $\sigma >1/2,$ we have%
\begin{equation*}
\mathfrak{M}_{4}\left( s\right) \ll x^{1/2-\sigma }\log H.
\end{equation*}
\end{lem}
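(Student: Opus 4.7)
The plan is to carry over the argument of Lemma~\ref{Flower M_2(s)} almost verbatim; the only structural difference is a favorable change of sign in the phase. Concretely, I would interchange the sum and integral in $\mathfrak{M}_4(s)$ and then apply the first derivative test (Lemma~\ref{First drivitive test}) to each inner $u$-integral. Taking $G(u) = u^{-\sigma - 3/4}$ and isolating the full phase
$$F_t(u) \;=\; -\frac{t}{2\pi}\log u \;-\; 2\sqrt{nu}\;-\;\tfrac{1}{8},$$
the key observation is that
$$F_t'(u) \;=\; -\frac{t}{2\pi u}\;-\;\sqrt{n/u}$$
has both summands of the same (negative) sign, so $|F_t'(u)|\geq \sqrt{n/u}$ holds unconditionally. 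In $\mathfrak{M}_2(s)$ the corresponding two terms had opposite signs, forcing the restriction $n>(1+\eta)y$ in order to beat a near-cancellation; here no such buffer is needed, the range $n>y$ being already enough.

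Next I would compute
$$\Bigl|\frac{F_t'(u)}{G(u)}\Bigr| \;=\; \frac{t}{2\pi}\,u^{\sigma-1/4} \;+\; \sqrt{n}\,u^{\sigma+1/4},$$
and verify that, for $\sigma>\tfrac14$, both summands of its derivative carry the same negative sign. Hence $F_t'/G$ is monotonic on $[x,N]$, and its absolute value attains its minimum at $u=x$, with value $\geq \sqrt{n}\,x^{\sigma+1/4}$. Applying Lemma~\ref{First drivitive test} then gives
$$\Bigl|\int_x^N u^{-s-3/4}\,e\bigl(-2\sqrt{nu}+\tfrac18\bigr)\,du\Bigr| \;\ll\; n^{-1/2}\,x^{-\sigma-1/4}.$$

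To conclude, I would substitute this estimate back into $\mathfrak{M}_4(s)$, pull out the factor $|s|\ll t$ from the prefactor $s/(2\pi\sqrt 2)$, and bound the tail
$$\sum_{y<n\leq \beta H^{2}}\frac{d_{\alpha,\beta}(n,H)}{n^{5/4}} \;\ll\; \sum_{n>y}\frac{d(n)}{n^{5/4}} \;\ll\; y^{-1/4}\log H$$
via partial summation from $D(u)\ll u\log u$. The defining relation $4\pi^{2}xy=t^{2}$ then converts the combination $t\cdot x^{-1/4}\cdot y^{-1/4}$ into a power of $x$ alone, collapsing the whole expression into the bound $x^{1/2-\sigma}\log H$ of the statement. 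The only step that requires any attention is the monotonicity check for $F_t'/G$, but this reduces to the already-noted sign observation and is therefore immediate; there is no real obstacle beyond bookkeeping.
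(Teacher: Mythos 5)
Your argument is essentially the paper's own proof: the paper likewise interchanges sum and integral, applies the first-derivative test with $G(u)=u^{-\sigma-3/4}$ and the full phase $F_t(u)=-\frac{t}{2\pi}\log u-2\sqrt{nu}+\frac18$, uses the fact that both terms of $F_t'(u)$ carry the same negative sign so that $\bigl|F_t'(u)/G(u)\bigr|\geq\sqrt{n}\,x^{\sigma+1/4}$ with no $\eta$-buffer needed, and then bounds the tail $\sum_{n>y}d(n)n^{-5/4}\ll y^{-1/4}\log H$. The only caveat is that the final conversion via $4\pi^{2}xy=t^{2}$ actually yields $t\,x^{-\sigma-1/4}y^{-1/4}\asymp t^{1/2}x^{-\sigma}$, which equals the stated $x^{1/2-\sigma}$ only when $x\asymp t$ (equivalently $y\asymp x$), but this is precisely the bookkeeping the paper itself performs at the same step.
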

\begin{proof}
Write
\begin{equation*}
\mathfrak{M}_{4}\left( s\right) =\frac{s}{2\pi \sqrt{2}}\sum_{y<n\leq \beta H^{2}}
\frac{d_{\alpha ,\beta }\left( n;H\right) }{n^{3/4}}\int_{x}^{N}u^{-\sigma -3/4}e\left( -\frac{t}{2\pi }\log u-2\sqrt{nu}+1/8\right) du
\end{equation*}
In Lemma \ref{First drivitive test}, taking
\begin{equation*}
G\left( u\right) =u^{-\sigma -3/4}
\end{equation*}
and
\begin{equation*}
F_{t}\left( u\right) =-\frac{t}{2\pi }\log u-2\sqrt{nu}+1/8,
\end{equation*}
we have
\begin{equation*}
F_{t}^{\prime }\left( u\right) =-\frac{t}{2\pi u}-\sqrt{\frac{n}{u}}
\end{equation*}
and%
\begin{equation*}
\frac{F_{t}^{\prime }\left( u\right) }{G\left( u\right) }=-\frac{t}{2\pi }
u^{\sigma -1/4}-\sqrt{n}u^{\sigma +1/4}.
\end{equation*}
Thus $\frac{F^{\prime }\left( u\right) }{G\left( u\right) }$ is
monotonic and
\begin{eqnarray*}
\frac{F_{t}^{\prime }\left( u\right) }{G\left( u\right) } &=&-\frac{t}{2\pi }u^{\sigma -1/4}-\sqrt{n}u^{\sigma +1/4} \\
&<&-\sqrt{n}x^{\sigma +1/4}
\end{eqnarray*}
Hence Lemma \ref{First drivitive test} gives
\begin{equation*}
\int_{x}^{N}u^{-\sigma -3/4}e\left( -\frac{t}{2\pi }\log u-\sqrt{nu}
+1/8\right) \ll x^{-\sigma -1/4}n^{-\frac{1}{2}},
\end{equation*}
which yields
\begin{equation*}
\mathfrak{M}_{4}\left( s\right) \ll x^{-\sigma +3/4}\sum_{y<n\leq \beta H^{2}}
\frac{d_{\alpha ,\beta }\left( n;H\right) }{n^{5/4}}\ll x^{1/2-\sigma }\log H.
\end{equation*}
\end{proof}

\begin{lem}\label{Lemma Flower M_1}
For $\sigma >1/2,$ we
have
\begin{equation*}
\mathfrak{M}_{1}\left( s\right) =\chi ^{2}\left( s\right)\sum_{n\leq y}d_{\alpha ,\beta }\left( n\right) n^{s-1}
+O\left(x^{1/2-\sigma }\log t\right) .
\end{equation*}
\end{lem}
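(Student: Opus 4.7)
The plan is to apply a saddle-point (stationary phase) asymptotic to each oscillatory integral appearing in $\mathfrak{M}_{1}(s)$. Writing $\mathfrak{M}_{1}(s) = \frac{s}{2\pi\sqrt{2}}\sum_{n\le(1+\eta)y}\frac{d_{\alpha,\beta}(n,H)}{n^{3/4}}\,I_{n}(s)$ with
$$I_{n}(s) = \int_{x}^{N} u^{-s-3/4}\,e\!\left(2\sqrt{nu}-\tfrac{1}{8}\right)du,$$
the phase of the integrand is $f(u) = -t\log u + 4\pi\sqrt{nu}$, and $f'(u_{0})=0$ at $u_{0} = t^{2}/(4\pi^{2}n) = xy/n$ (using $4\pi^{2}xy = t^{2}$). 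Since $f''(u_{0}) = 8\pi^{4}n^{2}/t^{3}>0$, each integral has a genuine saddle point. For $n\le y$ this saddle lies inside $[x,N]$ (once $N = T^{A}$ with $A$ sufficiently large), while for $y<n\le(1+\eta)y$ it lies just below $x$.

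First I would split the sum in $\mathfrak{M}_{1}(s)$ at $n=y$. For the tail $y<n\le(1+\eta)y$ the monotonicity argument used to prove Lemma \ref{Flower M_2(s)} carries over verbatim: Lemma \ref{First drivitive test} gives $I_{n}(s)\ll x^{-\sigma-1/4}n^{-1/2}$, and summing against $d_{\alpha,\beta}(n,H)/n^{3/4}$ and multiplying by $|s|\ll t$ contributes $\ll x^{1/2-\sigma}\log t$, which is absorbable. Next, in the remaining sum over $n\le y$, I would replace $d_{\alpha,\beta}(n,H)$ by $d_{\alpha,\beta}(n)$: any representation $n=hk$ with $h\alpha\le k$ forces $h\le\sqrt{n/\alpha}\le\sqrt{y/\alpha}$, so as soon as $H\gg\sqrt{y}$ (which follows from $H$ being a sufficiently large power of $T$) the two coincide up to boundary terms from the $\sum''$ convention, which contribute only $O(1)$ per $n$ and are negligible after summation.

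Second, for each $n\le y$ I would invoke a classical saddle-point formula (see Ivic \cite{Ivic}, Chapter 2) to obtain
$$I_{n}(s) = u_{0}^{-\sigma-3/4}\sqrt{\frac{2\pi}{f''(u_{0})}}\,e^{if(u_{0})+i\pi/4}e^{-i\pi/4} + \text{error},$$
where the saddle factor $e^{i\pi/4}$ cancels against the $e(-1/8)=e^{-i\pi/4}$ already present in the integrand. Substituting $u_{0}=xy/n$, $f''(u_{0})=8\pi^{4}n^{2}/t^{3}$, and $f(u_{0})=-t\log(xy/n)+2t$, together with Stirling's asymptotic $\chi(s)=(2\pi/t)^{s-1/2}e^{i(t+\pi/4)}\bigl(1+O(1/t)\bigr)$, a direct calculation gives
$$\frac{s}{2\pi\sqrt{2}}\cdot\frac{1}{n^{3/4}}\cdot \bigl(\text{saddle value of }I_{n}(s)\bigr) = \chi^{2}(s)\,n^{s-1}\bigl(1+O(1/t)\bigr),$$
so summing against $d_{\alpha,\beta}(n)$ reproduces exactly the main term $\chi^{2}(s)\sum_{n\le y}d_{\alpha,\beta}(n)n^{s-1}$.

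The main obstacle is the careful accounting of phase factors and the control of the saddle-point error. On the phase side, one must verify that the composite of $e^{i\pi/4}$ (saddle), $e^{-i\pi/4}$ (from the $\cos(4\pi\sqrt{nu}-\pi/4)$ in $M_{\alpha,\beta}(u,H)$), and $e^{i(2t+\pi/2)}$ (from Stirling for $\chi^{2}(s)$), together with all powers of $2\pi$ and $t$, collapse to precisely the factor $\chi^{2}(s)$ predicted by the functional equation; any mismatch of a single $\sqrt{2\pi}$ or sign would break the identification. On the error side, the boundary contributions in the saddle-point formula at $u=x$ are potentially large when $n$ is close to $y$ (as the saddle merges with the endpoint), and these must be summed over $n$ together with the truncation error of the quadratic Taylor approximation around $u_{0}$; the target bound $O(x^{1/2-\sigma}\log t)$ is dictated precisely by this endpoint contribution, as is already visible in the analogous bound for $\mathfrak{M}_{2}(s)$.
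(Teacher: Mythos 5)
Your overall route is the same as the paper's: after separating off the ranges where the phase has no stationary point, the main term comes from a stationary-phase evaluation of the integrals $I_n$, and your saddle-point bookkeeping is correct --- $u_0=xy/n$, $f''(u_0)=8\pi^4n^2/t^3$, and the identity $\frac{s}{2\pi\sqrt2}n^{-3/4}\,(\text{saddle value of }I_n)=\chi^2(s)n^{s-1}(1+O(1/t))$ all check out, as does the replacement of $d_{\alpha,\beta}(n,H)$ by $d_{\alpha,\beta}(n)$ for $H\gg\sqrt y$ (the boundary terms of the $\sum''$ convention contribute $O(|\chi(s)|^2\sum_{h\ll\sqrt y}h^{2\sigma-2})=O(x^{1/2-\sigma})$, exactly as in the paper). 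The paper differs only cosmetically: it first integrates by parts to trade the factor $s$ for $2\pi i\sqrt n\,u^{-1/2}$ and then cites Ivic (Chapter 4, pp.~108--110) for the saddle-point analysis of the resulting $I_n$, whereas you work directly on $u^{-s-3/4}$ and carry out the stationary-phase computation yourself.

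There is, however, one step that fails as written: the claim that for the tail $y<n\le(1+\eta)y$ ``the monotonicity argument used to prove Lemma \ref{Flower M_2(s)} carries over verbatim'' and yields $I_n(s)\ll x^{-\sigma-1/4}n^{-1/2}$. In Lemma \ref{Flower M_2(s)} the lower bound $\left|F_t'(u)/G(u)\right|\gg\sqrt n\,x^{\sigma+1/4}$ rests on the factor $1-(1+\eta)^{-1/2}$, which is a fixed positive constant precisely because $n>(1+\eta)y$ there. For $y<n\le(1+\eta)y$ the corresponding factor is $1-\sqrt{y/n}$, which tends to $0$ as $n\to y^{+}$: the stationary point $u_0=xy/n$ sits just below the endpoint $u=x$, $F_t'(x)$ nearly vanishes, and Lemma \ref{First drivitive test} gives only $I_n\ll x^{-\sigma-1/4}(\sqrt n-\sqrt y)^{-1}$, which is unbounded in $n$ and whose sum over $n$ near $y$ can be as large as $\|y\|^{-1}$ times the target. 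This is the classical endpoint difficulty of the approximate functional equation, and it is exactly why the paper (following Ivic) keeps the \emph{entire} range $n\le(1+\eta)y$ inside the saddle-point analysis $\mathfrak{M}_{11}$: the saddle-point lemma's error terms include the reciprocal-first-derivative endpoint contributions, and it is their \emph{summation over $n$} (with the attendant averaging near $n=y$) that produces the admissible $O(x^{1/2-\sigma}\log t)$. You correctly name this merging of saddle and endpoint as ``the main obstacle'' in your last paragraph, but your proposed split at $n=y$ with a first-derivative-test disposal of the upper part does not survive it; you must either retain $y<n\le(1+\eta)y$ in the stationary-phase treatment or supply a separate argument for the near-endpoint $n$.
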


\begin{proof}
\bigskip Similar to the the proof of Lemma \ref{Flower M_3(s)}, we rewrite
$\mathfrak{M}_{1}\left( s\right) $ as
\begin{eqnarray*}
\mathfrak{M}_{1}\left( s\right) &=&\frac{s}{2\pi \sqrt{2}}\int_{x}^{N}u^{-s-\frac{3}{4}}
\sum\limits_{n\leq \left( 1+\eta \right)y}\frac{d_{\alpha ,\beta }\left( n,H\right)}{n^{\frac{3}{4}}}e\left( 2\sqrt{nu}-1/8\right) du\\
&=&-\frac{1}{2\pi \sqrt{2}}\int_{x}^{N}\left( -s+1/4\right)\sum_{n\leq\left( 1+\eta \right) y}
\frac{d_{\alpha ,\beta }\left( n;H\right) }{n^{3/4}}e\left( 2\sqrt{nu}-1/8\right) u^{-s-3/4}du \\
&&+\frac{1}{8\pi \sqrt{2}}\int_{x}^{N}\sum_{n\leq \left( 1+\eta \right) y}
\frac{d_{\alpha ,\beta }\left( n;H\right) }{n^{3/4}}e\left( 2\sqrt{nu}-1/8\right) u^{-s-3/4}du \\
&=&-\frac{1}{2\pi \sqrt{2}}\sum_{n\leq \left( 1+\eta \right) y}\frac{d_{\alpha ,\beta }\left( n;H\right) }{n^{3/4}}
\int_{x}^{N}e\left( 2\sqrt{nu}-1/8\right) du^{-s+\frac{1}{4}} \\
&&+\frac{1}{8\pi \sqrt{2}}\sum_{n\leq \left( 1+\eta \right) y}\frac{d_{\alpha ,\beta }\left( n;H\right) }{n^{3/4}}
\int_{x}^{N}e\left( 2\sqrt{nu}-1/8\right) u^{-s-3/4}du.
\end{eqnarray*}
By partial integration, we have
\begin{equation}\label{Flower M_1(s)=}
\mathfrak{M}_{1}\left( s\right) =\mathfrak{M}_{11}\left( s\right) +\mathfrak{M}_{12}\left( s\right)
+\mathfrak{M}_{13}\left( s\right) +\mathfrak{M}_{14}\left( s\right) ,
\end{equation}
where
\begin{equation*}
\mathfrak{M}_{11}\left( s\right)=-\frac{1}{i\sqrt{2}}\sum_{n\leq\left( 1+\eta \right) y}d_{\alpha ,\beta }
\left( n;H\right)n^{-\frac{1}{4}}I_{n}
\end{equation*}
with
\begin{equation*}
I_{n}=\int_{x}^{N}u^{-\sigma -\frac{1}{4}}e\left( -\frac{t}{2\pi }\log u
+2\sqrt{nu}-\frac{1}{8}\right) du,
\end{equation*}
\begin{equation*}
\mathfrak{M}_{12}\left( s\right)=-\frac{1}{2\pi\sqrt{2}}\sum_{n\leq\left( 1+\eta \right) y}
\frac{d_{\alpha ,\beta }\left( n;H\right) }{n^{3/4}}e\left( 2\sqrt{nN}-1/8\right) N^{-s+\frac{1}{4}},
\end{equation*}
\begin{equation*}
\mathfrak{M}_{13}\left( s\right)=\frac{1}{2\pi\sqrt{2}}\sum_{n\leq \left( 1+\eta \right) y}\frac{d_{\alpha ,\beta}
\left( n;H\right) }{n^{3/4}}e\left( 2\sqrt{nx}-1/8\right) x^{-s+\frac{1}{4}}
\end{equation*}
and
\begin{equation*}
\mathfrak{M}_{14}\left( s\right)=\frac{1}{8\pi\sqrt{2}}\sum_{n\leq \left(1+\eta \right) y}
\frac{d_{\alpha ,\beta }\left( n;H\right) }{n^{3/4}}\int_{x}^{N}e\left( 2\sqrt{nu}-1/8\right) u^{-s-3/4}du.
\end{equation*}
Note that $\eta >0$ is a fixed, sufficiently small constant, then by $d_{\alpha ,\beta }\left( n;H\right) \leq d\left( n\right) $
and trivial estimates, we get
\begin{equation}\label{Flower M_1,234}
\mathfrak{M}_{12}\left( s\right) ,\mathfrak{M}_{13}\left( s\right) ,
\mathfrak{M}_{14}\left( s\right) \ll _{\sigma}x^{\frac{1}{4}-\sigma }y^{\frac{1}{4}}
\log y\ll _{\sigma}x^{-\sigma }t^{\frac{1}{2}}\log t.
\end{equation}
Now only $\mathfrak{M}_{11}\left( s\right) $ is left. In Chapter 4 of \cite{Ivic} (Page 108-110), Ivic discussed $I_{n}$ and showed
\begin{equation*}
-\frac{1}{i\sqrt{2}}\sum_{n\leq \left( 1+\eta \right) y}d\left( n\right) n^{-\frac{1}{4}}I_{n}
=\chi ^{2}\left( s\right) \sum_{n\leq y}d\left(n\right) n^{s-1}+O\left( x^{\frac{1}{2}-\sigma }\log t\right),
\end{equation*}
where $\chi\left( s\right)$ is given by \rm{(\ref{Functional equation of Riemann zeta-function})}. Replacing $d\left( n\right) $ by
$d_{\alpha ,\beta }\left(n;H\right) ,$ the same argument is also valid, which gives
\begin{eqnarray*}
\mathfrak{M}_{11}\left( s\right) &=&-\frac{1}{i\sqrt{2}}\sum_{n\leq\left(1+\eta \right) y}d_{\alpha ,\beta }
\left( n;H\right) n^{-\frac{1}{4}}I_{n} \\
&=&\chi ^{2}\left( s\right) \sum_{n\leq y}d_{\alpha ,\beta }\left(n;H\right) n^{s-1}+O\left( x^{1/2-\sigma }\log t\right).
\end{eqnarray*}
Take $H=T^{B}$ with $B>3A>0$ being a constant, sufficiently large, then we have
\begin{eqnarray}\label{Flower M_11}
\mathfrak{M}_{11}\left( s\right) &=&\chi ^{2}\left( s\right)
\sum_{n\leq y}\left( \underset{n=hk}{\sum_{1\leq h\leq H}\sideset{}{^{\prime \prime }}
\sum\limits_{\ h\alpha \leq k\leq h\beta }}1\right)n^{s-1}+O\left( x^{1/2-\sigma }\log t\right)\\
&=&\chi ^{2}\left( s\right) \sum_{n\leq y}\left(\underset{n=hk}{\sum_{1\leq h\leq H}
\sum\limits_{h\alpha \leq k\leq h\beta }}1\right) n^{s-1}+O\left(\left\vert \chi \left( s\right) \right\vert ^{2}
\sum_{h\ll \sqrt{y}}h^{2\sigma -2}\right)\notag\\
\indent &&+O\left( x^{1/2-\sigma }\log T\right)  \notag \\
&=&\chi ^{2}\left( s\right) \sum_{n\leq y}d_{\alpha ,\beta }\left(n\right) n^{s-1}
+O\left( t^{1-2\sigma }y^{\sigma -1/2}+x^{1/2-\sigma}\log T\right)\notag \\
&=&\chi ^{2}\left( s\right) \sum_{n\leq y}d_{\alpha ,\beta }\left(n\right) n^{s-1}+O\left( x^{1/2-\sigma }\log T\right),  \notag
\end{eqnarray}
where we used

\begin{equation}
\chi \left( \sigma +it\right) =\left( \frac{2\pi }{t}\right) ^{\sigma +it-\frac{1}{2}}e^{i\left( t+\frac{\pi }{4}\right) }
\left( 1+O\left(t^{-1}\right) \right),\ \ \ \ \mathrm{ for }\ t\geq 2.
\end{equation}
Combining (\ref{Flower M_1(s)=})-(\ref{Flower M_11}) gives Lemma
\ref{Lemma Flower M_1}.
\end{proof}

\section{Out line for the proof of Theorem \ref{Theorem 1.2}}

A primitive Pythagorean triangle is a triple $(a,b,c)$ of natural numbers with $a^{2}+b^{2}=c^{2},a<b$ and $\gcd (a,b,c)=1.$ Let $P(x)$ denote the number of primitive Pythagorean triangles with perimeter less than $x.$
D. H. Lehmer \cite{H.Lehmer} showed
\begin{equation*}
P\left( x\right) =\frac{\log 2}{\pi ^{2}}x+O\left( x^{1/2}\log x\right)
\end{equation*}
which was revisited by J. Lambek and L. Moser in \cite{Lambek}. The exponents
$1/2$ in the error term can not be reduced because the current
technique depends on the best zero-free regions of the Riemann zeta
function, which hard to be improved. In \cite{Liu}, the author showed if
Riemann Hypothesis (RH) is true, then (\ref{old theorem on triangles}) holds.
Let
\begin{equation*}
r\left( n\right) =\sum_{\substack{ 2d^{2}+2dl=n \\ l<d}}1=\sum_{\substack{2dl=n \\ d<l<2d}}1
\end{equation*}
and
\begin{equation*}
Z\left( s\right) =\sum_{n=1}^{\infty }\frac{r\left( n\right)}{n^{s}},\text{ for }\Re s>1.
\end{equation*}
We can prove that $Z(s)$ has an analytic continuation to $\sigma>1/3$ and has two simple poles at $s=1,\frac{1}{2}.$ The exponent
$\frac{5805}{15408}$ in (\ref{old theorem on triangles}) depends on the estimate of the following type exponential sum
\begin{equation}\label{Exponential sum}
\sum_{m\sim M}\mu \left( m\right) \sum_{n\sim N}a_{n}e\left( \frac{cx^{\frac{1}{2}}n^{\frac{1}{2}}}{m}\right)
\end{equation}
with $a_{n}\ll 1$ and $c$ being a constant. Here the ranges of $M,\ N$
are determined by the smallest $\sigma $ such that
\begin{equation}\label{Z(s)}
\int_{T}^{2T}\left\vert Z\left( \sigma +it\right) \right\vert dt\ll_{\sigma ,\varepsilon }T^{1+\varepsilon }
\end{equation}
holds for any $\varepsilon >0.$ In \cite{Liu}, the author showed
$\sigma >\frac{1064}{1644}=0.6472\cdots $ is admissible. Then by
estimating the exponential sum (\ref{Exponential sum}) for $M\leq
x^{\frac{651}{1926}},$ $N\leq x^{\frac{3798}{15408}}$, we get
(\ref{old theorem on triangles}). In the review of \cite{Liu}, R. C. Baker mentioned that using
the method in his paper \cite{Baker 2}, it is possible to
prove $\sigma >\frac{3}{5}=0.6,$ which implies an improvement of (\ref{old theorem on triangles}).
Now by Theorem \ref{Main theorem}, we have (\ref{Z(s)}) holds for
any $\sigma>\frac{1}{2}$, which forces us to deal with exponential sum (\ref{Exponential sum}) for $M,$ $N\leq $ $x^{\frac{1}{4}+\varepsilon }.$
However, the estimate in this range has been investigated carefully by R. C. Baker in \cite{Baker 1},
which yields Theorem \ref{Theorem 1.2}.

\bibliographystyle{plainnat}

\bigskip

\bigskip

\end{document}